\documentclass[11pt]{article}
\usepackage{graphicx}
\usepackage{amsmath}
\usepackage{amssymb}
\usepackage{amsthm}
\usepackage[titletoc]{appendix}
\usepackage{url}
\setlength{\textwidth}{16cm} 
\setlength{\textheight}{23cm}
\setlength{\oddsidemargin}{+0.2cm}
\setlength{\topmargin}{-2cm}

\newcommand\al\alpha
\newcommand\be\beta
\newcommand\de\delta
\newcommand\ep\varepsilon
\newcommand\tha\theta
\newcommand\ka\kappa
\newcommand\la\lambda
\newcommand\om\omega

\newcommand\iy\infty
\newcommand\pa\partial

\newcommand{\hyp}[5]{\,\mbox{}_{#1}F_{#2}\!\left(\genfrac{}{}{0pt}{}{#3}{#4};#5\right)}

\numberwithin{equation}{section}

\newtheorem{theorem}{Theorem}

\newtheorem{lemma}[theorem]{Lemma}

\newtheorem{Remark}[theorem]{Remark}

\begin{document}

\title{Exact formulas for the fourth and fifth cumulant of the Rosenblatt distribution.}
\author{Enno Diekema \footnote{email address: e.diekema@gmail.com}}
\date{}
\maketitle

\begin{abstract}
\noindent
In an earlier version of this paper \cite{6} a formula for the 4th cumulant of the Rosenblatt distribution was derived. In this paper formulas for the 3rd, 4th and 5th cumulant are derived using two methods. In the first method the necessary integrals are determined directly. The second method is the one used by Veillette and Taqqu. This is a recurring method. Finally both methods are discussed.
\end{abstract}

\section{Introduction}
\setlength{\parindent}{0cm}
\noindent

For an introduction of the Rosenblatt distribution we follow Tudor \cite{3} and Veilette and Taqqu \cite{4}. We have to concider  a stationary Gaussian sequence $X_i, i=1,2,...$ which has a covariance structure of the form $E[X_0 X_k]=L(k)\, k^{-d}$ as $k \rightarrow \infty$ with $0 \leq d \leq 0.5$ and $L(k)$ is a slowly varying function at infinity. Using the transformation $Y_i=(X_i)^2-1$ one can define a sequence of normalized sums
\[
Z_d^n=\dfrac{\sigma(d)}{n^{1-d}}\sum_{i=1}^n Y_i
\]
Here $\sigma(d)$ is a normalizing constant. This is given by
\[
\sigma(d)=\left(\dfrac{1}{2}(1-2d)(1-d)\right)^{1/2}
\]
The sequence $Z_d^n$ tends to a non-Gaussian limit $Z_d$ as $n \rightarrow \infty$ with variance $1$. The limit has a distribution named the Rosenblatt distribution. The characteristic function of $Z_d$ is given by a power series which is convergent only near the origin:
\begin{equation}
\varphi(\theta)=\exp\left( \dfrac{1}{2}\sum_{k=2}^\infty\big(2\, i\, \theta\, \sigma(d)\big)^k\, \dfrac{c_k}{k}\right)
\label{1.1a}
\end{equation}
where
\begin{equation}
c_k=\int_0^1\int_0^1...\int_0^1|x_1-x_2|^{-d}|x_2-x_3|^{-d}...|x_{k-1}-x_k|^{-d}|x_k-x_1|^{-d}dx_1\, dx_2...dx_k
\label{1.2}
\end{equation}

When $d=0$ the Rosenblatt distribution function is a standard chi-squared distribution with mean $0$ and variance $1$. When $d=0.5$ the distribution is a $N(0,1)$ Gauss distribution.

\

From \eqref{1.1a} the cumulants $\kappa_k(d)$ of the Rosenblatt distribution are $\kappa_1(0)=0$ and for $k \geq 2$
\begin{equation}
\kappa_k(d)=2^{k-1}(k-1)!\big(\sigma(d)\big)^k c_k=
2^{k-1}(k-1)!\left(\dfrac{1}{2}(1-2d)(1-d)\right)^{k/2} c_k
\label{1.3}
\end{equation}
where $c_k$ is given by \eqref{1.2}. The difficulties of the integral \eqref{1.2} lies in the absolute value of the different factors. In \cite{4} the authors claims that the cumulants for $k > 3$ could not be found. They describe a method with recurrence equations so there are less integrals. Then they approximate the integrals numerically and so they come to a table for the cumulants \cite[Table 4]{5}. In this paper we derive formulas for $\kappa_4(d)$ and $\kappa_5(d)$ by computing the integrals with two methods. The first method computes all the integrals directly. The second method of Veillette and Taqqu computes the integral recursive.

\

The second cumulant of the Rosenblatt distribution is given by $\kappa_2(d)=1$. In Section 3 we start with the direct computation of $\kappa_3(d)$. This third cumulant is known, but here we use  a method which is also used for the derivation of the higher cumulants. Section 4 treats the derivation of a formula for the fourth cumulant. In Section 5 we derive a formula for the fifth cumulant. The results are checked with the results from  \cite[Table 4]{5}. In Section 6 we describe the method of Veillette and Taqqu and apply this method to compute the integrals required for the fourth cumulant exactly. In Section 7 we used this method for the derivation of the fifth cumulant. In Section 8 we discuss both methods. The Appendix contains a number of Thomae transformations for the $_3F_2$ hypergeometric function \cite{1}.

\

\textbf{Remark 1:}\  In the whole paper integrals and summations are interchanged when needed. In all cases it can be shown that this is allowed by the dominant convergence theorem, but the proofs here are omitted while they are irrelevant for the course of the content of the paper.
\vspace{0.3 cm}

\textbf{Remark 2:}\  The simple integrals can usually be found in \cite{2}. Of course the integrals can be computed by hand with standard methods, but why make it difficult when it can be done easily as they can also be determined with Mathematica (a program of the Wolfram company). 
\vspace{0.3 cm}

\textbf{Remark 3:}\ The properties of the used Pochhammer symbols and the Gamma functions are listed in the Appendix of \cite{1}. We use very often the property $\Gamma(a+k)=\Gamma(a)\, (a)_k$ where $(a)_k$ is the Pochhammer symbol.
\vspace{0.3 cm}

\textbf{Remark 4:}\  For the derivation of the integrals we use frequently (when it is allowed) the transformation
\begin{equation}
\int_0^a f(y)\left(\int_0^y g(x)dx\right)dy=\int_0^a g(x)\left(\int_x^a f(y)dy\right)dx
\qquad a>y>x>0
\label{1.1}
\end{equation}
\vspace{0.3 cm}

\textbf{Remark 5:}\ Another transformation we use frequently is the substitution: $y_5=y_3\, z_3$, $y_3=y_2\, z_3$ and $y_2=y_1\, z_2$.

\section{Main result}
The main results of this paper are the formulas of the fourth and fifth cumulant of the Rosenblatt distribution as function of the parameter $d$ with $0 \leq d \leq 0.5$.

\

\begin{align*}
\kappa_4(d)&=\dfrac{12(1-2d)^2}{(1-d)(3-4d)}
\left(2\dfrac{\Gamma(1-d)\Gamma(2-d)^2}{\Gamma(3-3d)}+
\dfrac{\Gamma(1-d)^2\Gamma(2-d)^2}{\Gamma(2-2d)^2}+\hyp32{1,d,2-2d}{2-d,3-2d}{1}
\right) \\
\kappa_5(d)&=3840\left(\dfrac{1}{2}(1-2d)(1-d)\right)^{5/2} \dfrac{\Gamma(4-5d)}{\Gamma(6-5d)} S
\end{align*}
with
\begin{align*}
S&=\dfrac{\Gamma(1-d)^4}{\Gamma(4-4d)}+ 
\dfrac{\Gamma(1-d)^3}{(1-d)\Gamma(2-2d)}\dfrac{\Gamma(3-3d)}{\Gamma(4-4d)}
\hyp32{d,1-d,3-3d}{2-d,4-4d}{1}+ \\
&+\dfrac{\Gamma(1-d)^2\Gamma(2-2d)}{(1-d)\Gamma(4-4d)}\hyp32{d,1-d,2-2d}{2-d,4-4d}{1}
+\dfrac{\Gamma(1-d)^2}{3(1-d)\Gamma(3-2d)}\hyp32{1,d,3-3d}{3-2d,4-3d}{1}+ \\
&+\dfrac{2\Gamma(1-d)^2}{3(1-d)^2\Gamma(2-2d)}\hyp32{1,d,3-3d}{2-d,4-3d}{1}
+\dfrac{\Gamma(1-d)^2}{(1-d)\Gamma(3-2d)}\hyp32{1,d,2-2d}{2-d,3-2d}{1}
\end{align*}
\vspace{0.1 cm}

\section{Direct derivation of the third cumulant}
Although the third cumulant is known \cite{4}, it is derived here in a direct way. This method is also used for the computation of the fourth and fifth cumulants in the next sections.

For the third cumulant \eqref{1.3} gives
\begin{equation}
\kappa_3(d)=8\left(\dfrac{1}{2}(1-2d)(1-d)\right)^{3/2}c_3 \qquad 0 \leq d \leq 0.5
\label{3.1}
\end{equation}
For $c_3$ we get from \eqref{1.2}
\begin{equation}
c_3=\int_0^1\int_0^1\int_0^1 |x_1-x_2|^{-d}|x_2-x_3|^{-d}|x_3-x_1|^{-d}dx_1\, dx_2\, dx_3
\label{3.2}
\end{equation}
Following the definition of the absolute function
\begin{align*}
&|x|=x \qquad\ \ x>0 \\
&|x|=-x \qquad x<0
\end{align*}
and application to \eqref{3.2} gives
\begin{align*}
&|x_1-x_2|=x_1-x_2 \qquad x_1>x_2 \\
&|x_1-x_2|=x_2-x_1 \qquad x_2>x_1 \\
&|x_2-x_3|=x_2-x_3 \qquad x_2>x_3 \\
&|x_2-x_3|=x_3-x_2 \qquad x_3>x_2 \\
&|x_3-x_1|=x_3-x_1 \qquad x_3>x_1 \\
&|x_3-x_1|=x_1-x_3 \qquad x_1>x_3
\end{align*}
Each factor in \eqref{3.2} gives two possibilities and there are three factors so application to the integration domain gives eight possible conditions:
\begin{align*}
&x_1>x_2 \qquad x_2>x_3 \qquad x_3>x_1 \\
&x_1>x_2 \qquad x_2>x_3 \qquad x_1>x_3 \qquad\qquad 0<x_3<x_2<x_1<1 \\
&x_1>x_2 \qquad x_3>x_2 \qquad x_3>x_1 \qquad\qquad 0<x_2<x_1<x_3<1 \\
&x_1>x_2 \qquad x_3>x_2 \qquad x_1>x_3 \qquad\qquad 0<x_2<x_3<x_1<1 \\
&x_2>x_1 \qquad x_2>x_3 \qquad x_3>x_1 \qquad\qquad 0<x_1<x_3<x_2<1 \\
&x_2>x_1 \qquad x_2>x_3 \qquad x_1>x_3 \qquad\qquad 0<x_3<x_1<x_2<1 \\
&x_2>x_1 \qquad x_3>x_2 \qquad x_3>x_1 \qquad\qquad 0<x_1<x_2<x_3<1 \\
&x_2>x_1 \qquad x_3>x_2 \qquad x_1>x_3 
\end{align*}
The first and the last conditions give contradictions. So there remains six regions which are shown in Figure 1. Together these give a cubic region.
\begin{figure}[ht]
	\centering
	\includegraphics[width=340pt]{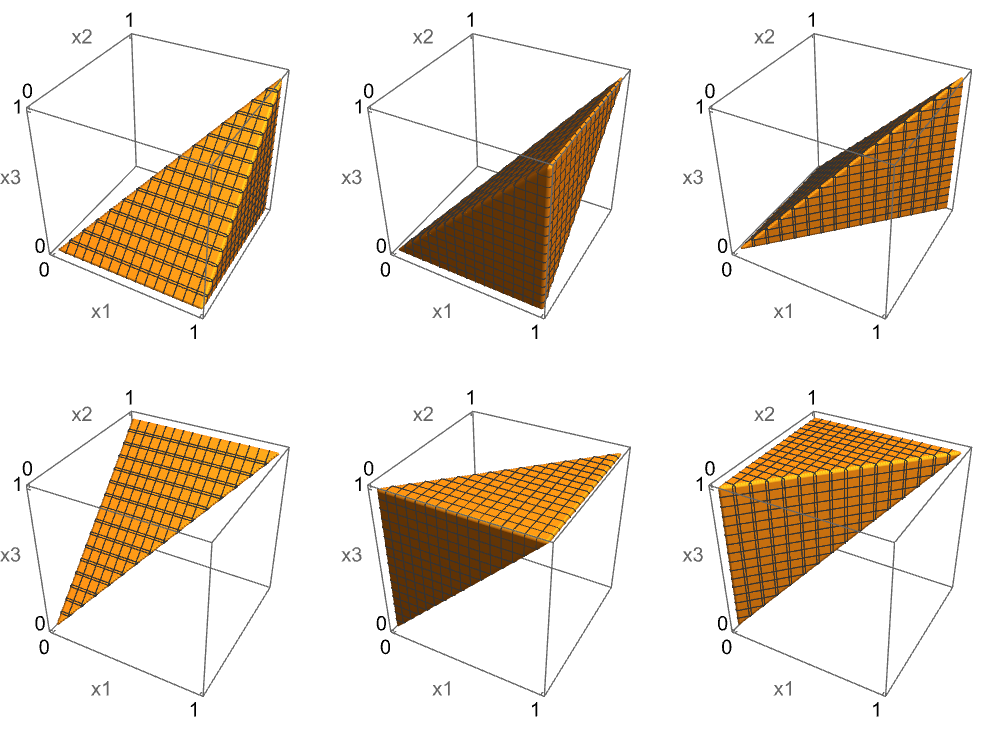}
	\caption{The six regions for $c_3$}
\end{figure}

There remains six integrals
\begin{align*}
&c_3(1)=\int_0^1\int_0^{x_1}\int_0^{x_2}(x_1-x_2)^{-d}(x_2-x_3)^{-d}(x_1-x_3)^{-d}
dx_3\, dx_2\, dx_1 \\
&c_3(2)=\int_0^1\int_0^{x_3}\int_0^{x_1}(x_3-x_1)^{-d}(x_1-x_2)^{-d}(x_3-x_2)^{-d}
dx_2\, dx_1\, dx_3 \\
&c_3(3)=\int_0^1\int_0^{x_1}\int_0^{x_3}(x_1-x_3)^{-d}(x_1-x_2)^{-d}(x_3-x_2)^{-d}
dx_2\, dx_3\, dx_1 \\
&c_3(4)=\int_0^1\int_0^{x_2}\int_0^{x_3}(x_2-x_3)^{-d}(x_2-x_1)^{-d}(x_3-x_1)^{-d}
dx_1\, dx_3\, dx_2 
\end{align*}
\begin{align*}
&c_3(5)=\int_0^1\int_0^{x_2}\int_0^{x_1}(x_2-x_1)^{-d}(x_2-x_3)^{-d}(x_1-x_3)^{-d}
dx_3\, dx_1\, dx_2 \\
&c_3(6)=\int_0^1\int_0^{x_3}\int_0^{x_2}(x_3-x_2)^{-d}(x_2-x_1)^{-d}(x_3-x_1)^{-d}
dx_1\, dx_2\, dx_3
\end{align*}
Each of these integrals can be transformed into the following integral
\[
I=\int_0^1\int_0^{y_1}\int_0^{y_2}(y_1-y_2)^{-d}(y_2-y_3)^{-d}(y_1-y_3)^{-d}
dy_3\, dy_2\, dy_1
\]
Then $c_3$ is the sum of the six integrals and we get
\begin{equation}
c_3=6\int_0^1\int_0^{y_1}\int_0^{y_2}(y_1-y_2)^{-d}(y_2-y_3)^{-d}(y_1-y_3)^{-d}
dy_3\, dy_2\, dy_1
\label{3.3}
\end{equation}
In \cite{4} the integral is computed. Here we use another method which is used extensively in the following sections. From \eqref{3.3} we get
\[
\int_0^{y_2}(y_2-y_3)^{-d}(y_1-y_3)^{-d}dy_3=
\dfrac{1}{1-d}(y_2)^{1-d}(y_1)^{-d}\hyp21{1,d}{2-d}{\dfrac{y_2}{y_1}}
\]
For the integral to $y_2$ we get
\[
I_2=\dfrac{1}{1-d}(y_1)^{-d}\int_0^{y_1}(y_1-y_2)^{-d}(y_2)^{1-d}
\hyp21{1,d}{2-d}{\dfrac{y_2}{y_1}}dy_2
\]
Writing the hypergeometric function as a summation and interchanging the integral and the summation gives
\[
I_2=\dfrac{1}{1-d}(y_1)^{-d}\sum_{k=0}^\infty\dfrac{(d)_k}{(2-d)_k}
\left(\dfrac{1}{y_1}\right)^k\int_0^{y_1}(y_1-y_2)^{-d}(y_2)^{1-d+k}dy_2
\]
The integral is well known and the result after a lot of manipulations is
\[
I_2=\dfrac{\Gamma(1-d)^2}{\Gamma(3-2d)}(y_1)^{2-3d}\hyp21{1,d}{3-2d}{1}=
\dfrac{\Gamma(1-d)^2\Gamma(2-3d)}{\Gamma(2-2d)\Gamma(3-3d)}(y_1)^{2-3d}
\]
At last we get
\[
I=\dfrac{\Gamma(1-d)^2\Gamma(2-3d)}{\Gamma(2-2d)\Gamma(3-3d)}\int_0^1(y_1)^{2-3d}dy_1=
\dfrac{\Gamma(1-d)^2\Gamma(2-3d)}{\Gamma(2-2d)\Gamma(4-3d)}
\]
Then the result for $c_3$ is
\[
c_3=6\dfrac{\Gamma(1-d)^2\Gamma(2-3d)}{\Gamma(2-2d)\Gamma(4-3d)}
\]
and this is the same as Veilette and Taqqu derived in \cite[(12)]{4}. So for the third cumulant we get
\begin{equation}
\kappa_3(d)=48\left(\dfrac{1}{2}(1-2d)(1-d)\right)^{3/2}
\dfrac{\Gamma(1-d)^2\Gamma(2-3d)}{\Gamma(2-2d)\Gamma(4-3d)}
\label{3.4}
\end{equation}
with $0 \leq d \leq 0.5$. For $d=0$ we get for the total sum of the integrals: $c_3(0)=1$. Formula \eqref{3.4} gives the following table of $\kappa_3(d)$ versus $d$.  This formula gives the same values as the values from Table 4 in \cite{5}.

\begin{table}[!ht]
	\centering
	\begin{tabular}[t]{|r|r|r|r|r|r|r|r|r|r|r|r|}
		\hline
		$d$&$0$ &0.05&0.10&0.15&0.20&0.25&0.30&0.35&0.40&0.45&0.50\\
		\hline
		$\kappa_3$ &2.828&2.815&2.770&2.684&2.548&2.348&2.067&1.686&1.183&0.5603&0\\
		\hline
	\end{tabular}
	\caption{The values of $\kappa_3(d)$ as function of $d$. $\kappa_3(d)$ is computed by formula $\eqref{3.4}$.}
	\label{Ta2}
\end{table}

In the Figure 2 we show this table.
\begin{figure}[ht]
	\centering
	\includegraphics[width=200pt]{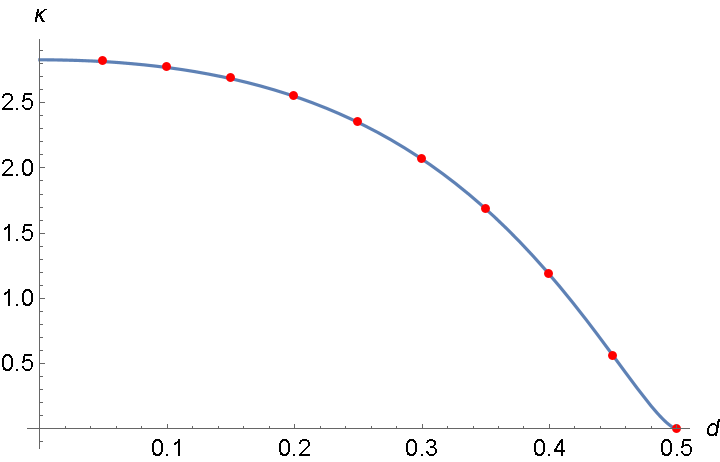}
	\caption{   The third cumulant $\kappa_3(d)$ versus $d$. The continues line is from $\eqref{3.4}$. The dots are from Table 1.}
\end{figure}
\vspace{0.1 cm}

\section{Direct derivation of the fourth cumulant}
For the fourth cumulant \eqref{3.1} gives
\begin{equation}
\kappa_4(d)=48\left(\dfrac{1}{2}(1-2d)(1-d)\right)^2c_4(d)
\label{4.1}
\end{equation}
For $c_4(d)$ we get from \eqref{3.2}
\begin{equation}
c_4(d)=\int_0^1\int_0^1\int_0^1\int_0^1 |x_1-x_2|^{-d}|x_2-x_3|^{-d}|x_3-x_4|^{-d}|x_4-x_1|^{-d}
dx_1\, dx_2\, dx_3\, dx_4
\label{4.2}
\end{equation}
There remains 24 regions and so these give 24 integrals. Transformation gives at last three different integrals and every integral occurs eight times.
\begin{align*}
&c_4(1)=\int_0^1\int_0^{y_1}\int_0^{y_2}\int_0^{y_3}
(y_1-y_2)^{-d}(y_2-y_3)^{-d}(y_3-y_4)^{-d}(y_1-y_4)^{-d}
dy_4\, dy_3\, dy_2\, dy_1 \\
&c_4(2)=\int_0^1\int_0^{y_1}\int_0^{y_2}\int_0^{y_3}
(y_1-y_2)^{-d}(y_1-y_3)^{-d}(y_2-y_4)^{-d}(y_3-y_4)^{-d}
dy_4\, dy_3\, dy_2\, dy_1 \\
&c_4(3)=\int_0^1\int_0^{y_1}\int_0^{y_2}\int_0^{y_3}
(y_1-y_3)^{-d}(y_2-y_3)^{-d}(y_1-y_4)^{-d}(y_2-y_4)^{-d}
dy_4\, dy_3\, dy_2\, dy_1
\end{align*}
Then for $c_4$ we get
\begin{equation}
c_4(d)=8\Big(c_4(1)+c_4(2)+c_4(3)\Big)
\label{4.6}
\end{equation}

\subsection{Derivation of $c_4(1)$}
For $c_4(1)$ we have
\[
c_4(1)=\int_0^1\int_0^{y_1}\int_0^{y_2}\int_0^{y_3}
(y_1-y_2)^{-d}(y_2-y_3)^{-d}(y_3-y_4)^{-d}(y_1-y_4)^{-d}
dy_4\, dy_3\, dy_2\, dy_1
\]
The integral over $y_4$ gives
\[
\int_0^{y_3}(y_3-y_4)^{-d}(y_1-y_4)^{-d}dy_4=
\dfrac{1}{(1-d)}(y_1)^{-d}(y_3)^{1-d}\hyp21{1,d}{2-d}{\dfrac{y_3}{y_1}}
\]
The integral over $y_3$ gives
\[
\int_0^{y_2}(y_3)^{1-d}(y_2-y_3)^{-d}\hyp21{1,d}{2-d}{\dfrac{y_3}{y_1}}dy_3=
\dfrac{\Gamma(1-d)\Gamma(2-d)}{\Gamma(3-2d)}(y_2)^{2-2d}\hyp21{1,d}{3-2d}{\dfrac{y_2}{y_1}}
\]
Integration over $y_2$ and $y_1$ gives
\begin{multline*}
c_4(1)=\dfrac{\Gamma(1-d)\Gamma(2-d)}{\Gamma(3-2d)}\int_0^1\int_0^{y_1}(y_1-y_2)^{-d}
(y_2)^{2-2d}\hyp21{1,d}{3-2d}{\dfrac{y_2}{y_1}}dy_2\, dy_1= \\
=\dfrac{\Gamma(1-d)^3\Gamma(3-4d)}{\Gamma(3-3d)\Gamma(5-4d)}
 \end{multline*}

\subsection{Derivation of $c_4(2)$}
For $c_4(2)$ we have
\[
c_4(2)=\int_0^1\int_0^{y_1}\int_0^{y_2}\int_0^{y_3}
(y_1-y_2)^{-d}(y_1-y_3)^{-d}(y_2-y_4)^{-d}(y_3-y_4)^{-d}
dy_4\, dy_3\, dy_2\, dy_1
\]
The integral over $y_4$ gives
\[
\int_0^{y_3}(y_2-y_4)^{-d}(y_3-y_4)^{-d}dy_4=
\dfrac{1}{(1-d)}(y_2)^{-d}(y_3)^{{1-d}}\hyp21{1,d}{2-d}{\dfrac{y_3}{y_1}}
\]
Writing the hypergeometric function as a summation and integration over $y_3$ gives
\[
c_4(2)=\dfrac{1}{(1-d)}\dfrac{\Gamma(1-d)}{\Gamma(3-d)}\sum_{k=0}^\infty
\int_0^1\int_0^{y_1}(y_1)^{-d}(y_2)^{2-2d}(y_1-y_2)^{-d}
\hyp21{d,2-d+k}{3-d+k}{\dfrac{y_2}{y_1}}dy_2\, dy_1
\]
Integration over $y_2$ and $y_1$ gives
\[
c_4(2)=\dfrac{\Gamma(3-2d)\Gamma(1-d)^2}{(4-4d)\Gamma(4-3d)\Gamma(3-d)}
\sum_{k=0}^\infty\dfrac{(d)_k}{(3-d)_k}\hyp32{2-d+k,d,3-2d}{3-d+k,4-3d}{1}
\]
After using the Thomae transformation \eqref{55.1} we get
\[
c_4(2)=2\dfrac{\Gamma(1-d)}{\Gamma(3-d)}
\dfrac{\Gamma(2-2d)^2}{\Gamma(5-4d)}
\sum_{k=0}^\infty\dfrac{(d)_k}{(3-d)_k}
\hyp32{1,d,d+k}{2-d,3-d+k}{1}
\]
After writing the hypergeometric function as a summation over $j$, do the summation over $k$. Then the summation over $j$ gives
\[
c_4(2)=\dfrac{\Gamma(2-2d)^2}{(1-d)^2\Gamma(5-4d|)}\hyp32{1,d,d}{2-d,2-d}{1}
\]
Using the Thomae transformation \eqref{55.5} gives after a lot of manipulations
\begin{equation*}
c_4(2)=\dfrac{\Gamma(1-d)^4\Gamma(3-4d)}{2\Gamma(2-2d)^2\Gamma(5-4d)}
\end{equation*}

\subsection{Derivation of $c_4(3)$}
For $c_4(3)$ we have
\[
c_4(3)=\int_0^1\int_0^{y_1}\int_0^{y_2}\int_0^{y_3}
(y_1-y_3)^{-d}(y_2-y_3)^{-d}(y_1-y_4)^{-d}(y_2-y_4)^{-d}
dy_4\, dy_3\, dy_2\, dy_1
\]
In this section we describe two methods to compute this integral. The reason for this is that both methods are frequently used for the derivation of the fifth cumulant of the Rosenblatt distribution.

For the first method we use the property of Remark 4. Application gives
\begin{multline*}
\int_0^{y_2}\int_0^{y_3}
(y_1-y_3)^{-d}(y_2-y_3)^{-d}(y_1-y_4)^{-d}(y_2-y_4)^{-d}dy_4\, dy_3= \\
=\int_0^{y_2}(y_1-y_4)^{-d}(y_2-y_4)^{-d}
\int_{y_4}^{y_2}(y_1-y_3)^{-d}(y_2-y_3)^{-d}dy_3\, dy_4
\end{multline*}
Using the transformation $y_3=\big(y_2-y_4\big)z+y_4$ gives
\begin{multline*}
\int_0^{y_2}(y_1-y_4)^{-2d}(y_2-y_4)^{1-2d}
\int_0^1 (1-z)^{-d}\left(1-\dfrac{y_2-y_4}{y_1-y_4}z\right)^{-d}dz\, dy_1= \\
=\dfrac{1}{1-d}\int_0^{y_2}(y_1-y_4)^{-2d}(y_2-y_4)^{1-2d}
\hyp21{1,d}{2-d}{\dfrac{y_2-y_4}{y_1-y_4}} dy_1
\end{multline*}
Writing the hypergeometric function as a summation and interchanging the integral and the summation gives
\begin{multline*}
\dfrac{1}{1-d}\sum_{k=0}^\infty\dfrac{(d)_k}{(2-d)_k}
\int_0^{y_2}(y_1-y_4)^{-2d-k}(y_2-y_4)^{1-2d+k}dy_4= \\
=\dfrac{1}{1-d}\dfrac{\Gamma(2-2d)}{\Gamma(3-2d)}
\sum_{k=0}^\infty\dfrac{(d)_k(2-2d)_k}{(2-d)_k(3-2d)_k}
(y_1)^{-2d-k}(y_2)^{2-2d+k}\hyp21{1,2d+k}{3-2d+k}{\dfrac{y_2}{y_1}}
\end{multline*}
Substitution in $c_4(3)$ gives
\begin{multline*}
c_4(3)=\dfrac{1}{1-d}\dfrac{\Gamma(2-2d)}{\Gamma(3-2d)}
\sum_{k=0}^\infty\dfrac{(d)_k(2-2d)_k}{(2-d)_k(3-2d)_k} \\
\int_0^1\int_0^{y_1}(y_1)^{-2d-k}(y_2)^{2-2d+k}\hyp21{1,2d+k}{3-2d+k}{\dfrac{y_2}{y_1}}
dy_2\, dy_1
\end{multline*}
For the inner integral we get after using the transformation $y_2=y_1\, z$
\begin{multline*}
\int_0^{y_1}(y_2)^{2-2d+k}\hyp21{1,2d+k}{3-2d+k}{\dfrac{y_2}{y_1}}dy_2= \\
=(y_1)^{3-2d+k}\int_0^1\, z^{2-2d+k}\hyp21{1,2d+k}{3-2d+k}{z}dz=
\dfrac{\Gamma(3-4d)}{\Gamma(4-4d)}(y_1)^{3-2d+k}
\end{multline*}
Substitution gives
\begin{align*}
c_4(3)&=\dfrac{1}{1-d}\dfrac{\Gamma(2-2d)}{\Gamma(3-2d)}\dfrac{\Gamma(3-4d)}{\Gamma(4-4d)}
\sum_{k=0}^\infty\dfrac{(d)_k(2-2d)_k}{(2-d)_k(3-2d)_k}
\int_0^1(y_1)^{3-4d}dy_1= \\
&=\dfrac{1}{1-d}\dfrac{\Gamma(2-2d)}{\Gamma(3-2d)}\dfrac{\Gamma(3-4d)}{\Gamma(5-4d)}
\sum_{k=0}^\infty\dfrac{(d)_k(2-2d)_k}{(2-d)_k(3-2d)_k}
\end{align*}
The summation can be written as a $_3F_2$ hypergeometric function. After some manipulations we get at last
\begin{equation}
c_4(3)=\dfrac{1}{2(1-d)^2}\dfrac{\Gamma(3-4d)}{\Gamma(5-4d)}
\hyp32{1,d,2-2d}{2-d,3-2d}{1}
\label{4.9}
\end{equation}

\

\

For the second method we use the transformations of Remark 5 and get
\[
c_4(3)=\dfrac{1}{(4-4d)}\int_0^1\int_0^1\int_0^1
(z_2)^{2-2d}(z_3)(1-z_3)^{-d}(1-z_2\, z_3)^{-d}(1-z_2\, z_3\, z_4)^{-d}(1-z_3\, z_4)^{-d}
dz_2\, dz_3\, dz_4
\]
Writing the factor $(1-z_2\, z_3\, z_4)^{-d}$ as a summation gives after integration over $z_2$ 
\begin{multline*}
c_4(3)=\dfrac{1}{(4-4d)}\sum_{k=0}^\infty\dfrac{(d)_k}{(3-2d+k)k!} \\
\int_0^1\int_0^1(1-z_3)^{-d})z_3)^{1+k}(z_4)^k(1-z_3\, z_4)^{-d}
\hyp21{d,3-2d+k}{4-2d+k}{z_3} dz_3\, dz_4
\end{multline*}
Integration over $z_4$ gives
\begin{multline*}
c_4(3)=\dfrac{1}{(4-4d)}\sum_{k=0}^\infty\dfrac{(d)_k}{(3-2d+k))(1+k)k!} \\
\int_0^1(z_3)^{(1-d)-1}(1-z_3)^{1+k}
\hyp21{d,1+k}{2+k}{1-z_3}\hyp21{d,3-2d+k}{4-2d+k}{1-z_3}dz_3
\end{multline*}
From \cite[2.21.9(12)]{2} there is
\begin{align*}
&\int_0^1x^{\alpha-1}(1-x)^{c-1}\hyp21{a,b}{c}{1-x}\hyp21{a',b'}{c'}{1-x}dx=  \\
&=\dfrac{\Gamma(c)\Gamma(c')\Gamma(c'-a'-b')\Gamma)(\alpha)\Gamma(c-a-b+\alpha)}
{\Gamma(c-a+\alpha)\Gamma(c-b+\alpha)\Gamma(c'-a')\Gamma(c'-b')}
\hyp43{a',b',\alpha,c-a-b+\alpha}{c-a+\alpha,c-b+\alpha,a'+b'-c'+1}{1}+ \\
&+\dfrac{\Gamma(c)\Gamma(c')\Gamma(a'+b'-\underline{c'})\Gamma(c'-a'-b'+\alpha)\Gamma(c+c'-a-a'-b-b'+\alpha)}{\Gamma(a')\Gamma(b')\Gamma(c+c'-a-a'-b'+\alpha)\Gamma(c+c'-a'-b-b'+\alpha)} \\
&\qquad\qquad\qquad
\hyp43{c'-a',c'-b',c'-a'-b'+\alpha,c+c'-a-a'-b-b'+\alpha}{c'-a'-b'+1,c+c'-a-a'-b'+\alpha,c+c'-a'-b-b'+\alpha}{1}
\end{align*}
Note the little mistake with the underlined $c'$ which is here corrected. Application gives after summation over $k$
\[
c_4(3)=\dfrac{\Gamma(4-3d)\Gamma(1-d)^2\Gamma(2-2d)\Gamma(2d-1)}
{\Gamma(5-4d)\Gamma(3-3d)\Gamma(d)}-
\dfrac{\Gamma(3-4d)\Gamma(2d-1)}{(1-d)\Gamma(5-4d)\Gamma(2d)}
\hyp32{2d-1,d,1}{2d,2-d}{1}
\]
Using the Thomae transformation \eqref{55.3} gives the same result as \eqref{4.9}.

\subsection{The fourth cumulant}
For $c_4$ the summation of $8\big(c_4(1)+c_4(2)+c_4(3)\big)$ gives
\[
c_4=
\dfrac{1}{(1-d)^2(3-4d)}\left(\hyp32{2-2d,1,d}{3-2d,2-d}{1}+\dfrac{\Gamma(1-d)^2\Gamma(2-d)^2}{\Gamma(2-2d)^2}+
\dfrac{2\Gamma(1-d)\Gamma(2-d)^2}{\Gamma(3-3d)}\right)
\]
For the fourth cumulant we use \eqref{4.1} and \eqref{4.6}. The result is
\[
\kappa_4(d)=12(1-2d)^2(1-d)^2c_4 
\]
Substitution gives for the fourth cumulant
\begin{multline}
\kappa_4(d)=\dfrac{12(1-2d)^2}{(1-d)(3-4d)} \\
\left(\hyp32{2-2d,1,d}{3-2d,2-d}{1}+\dfrac{\Gamma(1-d)^2\Gamma(2-d)^2}{\Gamma(2-2d)^2}+
\dfrac{2\Gamma(1-d)\Gamma(2-d)^2}{\Gamma(3-3d)}\right)
\label{4.10}
\end{multline}
with $0 \leq d \leq 0.5$. For $d=0$ we get for the total sum of the integrals: $8\, \big(c_4(1)+c_4(2)+c_4(3)\big)=1$. Formula \eqref{4.10} gives the following table of $\kappa_4$ versus $d$. This formula gives the same values as the values from Table 4 in \cite{5}.

\begin{table}[!ht]
\centering
\begin{tabular}[t]{|r|r|r|r|r|r|r|r|r|r|r|r|}
	\hline
	$d$&$0$ &0.05&0.10&0.15&0.20&0.25&0.30&0.35&0.40&0.45&0.50\\
	\hline
	$\kappa_4$ &12.00&11.92&11.66&11.15&10.35&9.192&7.632&5.665&3.392&1.173&0\\
	\hline
\end{tabular}
\caption{The values of $\kappa(d)$ as function of $d$. $\kappa_4(d)$ is computed by formula $\eqref{4.10}$.}
\label{Ta2}
\end{table}

In the next figure this table is shown.
\begin{figure}[ht]
	\centering
	\includegraphics[width=200pt]{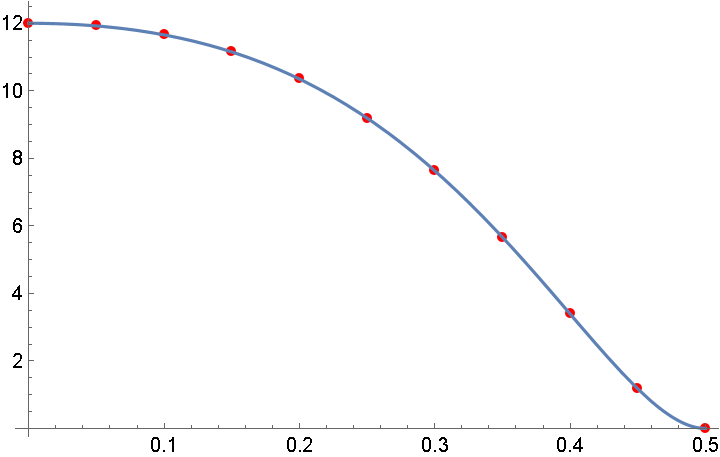}
	\caption{   The fourth cumulant $\kappa_4(d)$ versus $d$. The continues line is from $\eqref{4.10}$. The dots are from Table 2.}
\end{figure}
\vspace{0.1 cm}

\section{Direct derivation of the fifth cumulant}
For the fifth cumulant \eqref{1.3} gives
\begin{equation}
\kappa_5(d)=384\left(\dfrac{1}{2}(1-2d)(1-d)\right)^{5/2} c_5
\label{53.1}
\end{equation}
From this formula follows immediately that $0 \leq d \leq 0.5$. For $c_5$ we get from \eqref{1.2}
\begin{multline}
c_5=\int_0^1\int_0^1\int_0^1\int_0^1\int_0^1 |x_1-x_2|^{-d}|x_2-x_3|^{-d}|x_3-x_4|^{-d}(x_4-x_5)^{-d} \\
|x_5-x_1|^{-d}dx_1\, dx_2\, dx_3\, dx_4\, dx_5
\label{54.2}
\end{multline}
The factors with the absolute values should be split up. There remains 120 regions and so these give 120 integrals. Using the transformations of Remark 5 there remains twelve different integrals and every integral occurs ten times.
The regions are
\begin{align*}
&(\ 1)\qquad 0<x_5<x_4<x_3<x_2<x_1<1 \\
&(\ 2)\qquad 0<x_4<x_5<x_3<x_2<x_1<1 \\
&(\ 3)\qquad0<x_4<x_3<x_5<x_2<x_1<1 \\
&(\ 4)\qquad0<x_5<x_3<x_4<x_2<x_1<1 \\
&(\ 5)\qquad0<x_3<x_5<x_4<x_2<x_1<1 \\
&(\ 6)\qquad0<x_5<x_3<x_2<x_4<x_1<1 \\
&(\ 7)\qquad0<x_3<x_5<x_2<x_4<x_1<1 \\
&(\ 8)\qquad0<x_5<x_3<x_2<x_1<x_4<1 \\
&(\ 9)\qquad0<x_4<x_3<x_2<x_5<x_1<1 \\
&(10)\qquad0<x_3<x_2<x_5<x_4<x_1<1 \\
&(11)\qquad0<x_3<x_5<x_2<x_1<x_4<1 \\
&(12)\qquad0<x_3<x_2<x_5<x_1<x_4<1
\end{align*}
As an example how to get the transformed integrals we take region (3). The integral becomes
\begin{multline*}
c_5(3)=\int_0^1\int_0^{x_1}(x_1-x_2)^{-d}\int_0^{x_2}(x_1-x_5)^{-d}
\int_0^{x_5}(x_2-x_5)^{-d} \\
\qquad\qquad\qquad\qquad\qquad\qquad\qquad\qquad\qquad
\int_0^{x_3}(x_5-x_4)^{-d}(x_3-x_4)^{-d}
dx_4\, dx_3\, dx_5\, dx_2\, dx_1
\end{multline*}
Transforming  this integral gives
\begin{multline*}
c_5(3)=\int_0^1\int_0^{y_1}(y_1-y_2)^{-d}\int_0^{y_2}(y_1-y_3)^{-d}
\int_0^{y_3}(y_2-y_4)^{-d} \\
\qquad\qquad\qquad\qquad\qquad\qquad\qquad\qquad\qquad
\int_0^{y_4}(y_3-y_5)^{-d}(y_4-y_5)^{-d}
dy_5\, dy_4\, dy_3\,  dy_2\, dy_1	
\end{multline*}	
When applying the transformations of Remark 5 we get
\begin{multline*}
c_5(3)=\dfrac{1}{(5-5d)}\int_0^1\int_0^1\int_0^1\int_0^1
(z_2)^{3-3d}(1-z_2)^{-d}(z_3)^{2-2d}(1-z_2\, z_3)^{-d} \\
(z_4)^{1-d}(1-z_3\, z_4)^{-d}(1-z_4\, z_5)^{-d}(1-z_5)^{-d}dz_5\, dz_4\, dz_3\, dz_2
\end{multline*}
These transformations can be applied for all 120 integrals and we can compare all these integrals. There remains the following integrals
\begin{align*}
&c_5(1)=\int_0^1\int_0^{y_1}(y_1-y_2)^{-d}\int_0^{y_2}(y_2-y_3)^{-d}
\int_0^{y_3}(y_3-y_4)^{-d} \\
&\qquad\qquad\qquad\qquad\qquad\qquad\qquad\qquad\qquad \int_0^{y_4}(y_4-y_5)^{-d}(y_1-y_5)^{-d}
dy_5\, dy_4\, dy_3\,  dy_2\, dy_1 \\
&c_5(2)=\int_0^1\int_0^{y_1}(y_1-y_2)^{-d}\int_0^{y_2}(y_2-y_3)^{-d}
\int_0^{y_3}(y_1-y_4)^{-d} \\
&\qquad\qquad\qquad\qquad\qquad\qquad\qquad\qquad\qquad
\int_0^{y_4}(y_4-y_5)^{-d}(y_3-y_5)^{-d}
dy_5\, dy_4\, dy_3\,  dy_2\, dy_1 
\end{align*}
\begin{align*}
&c_5(3)=\int_0^1\int_0^{y_1}(y_1-y_2)^{-d}\int_0^{y_2}(y_1-y_3)^{-d}
\int_0^{y_3}(y_2-y_4)^{-d} \\
&\qquad\qquad\qquad\qquad\qquad\qquad\qquad\qquad\qquad
\int_0^{y_4}(y_3-y_5)^{-d}(y_4-y_5)^{-d}
dy_5\, dy_4\, dy_3\,  dy_2\, dy_1 \\
&c_5(4)=\int_0^1\int_0^{y_1}(y_1-y_2)^{-d}\int_0^{y_2}\int_0^{y_3}
(y_2-y_4)^{-d}(y_3-y_4)^{-d} \\
&\qquad\qquad\qquad\qquad\qquad\qquad\qquad\qquad\qquad
\int_0^{y_4}(y_3-y_5)^{-d}(y_1-y_5)^{-d}
dy_5\, dy_4\, dy_3\,  dy_2\, dy_1 \\
&c_5(5)=\int_0^1\int_0^{y_1}(y_1-y_2)^{-d}\int_0^{y_2}\int_0^{y_3}
(y_3-y_4)^{-d}(y_1-y_4)^{-d} \\
&\qquad\qquad\qquad\qquad\qquad\qquad\qquad\qquad\qquad
\int_0^{y_4}(y_2-y_5)^{-d}(y_3-y_5)^{-d}
dy_5\, dy_4\, dy_3\,  dy_2\, dy_1 \\
&c_5(6)=\int_0^1\int_0^{y_1}\int_0^{y_2}(y_1-y_3)^{-d}\int_0^{y_3}
(y_3-y_4)^{-d}(y_2-y_4)^{-d} \\
&\qquad\qquad\qquad\qquad\qquad\qquad\qquad\qquad\qquad
\int_0^{y_4}(y_2-y_5)^{-d}(y_1-y_5)^{-d}
dy_5\, dy_4\, dy_3\,  dy_2\, dy_1 \\
&c_5(7)=\int_0^1\int_0^{y_1}\int_0^{y_2}(y_1-y_3)^{-d}\int_0^{y_3}
(y_2-y_4)^{-d}(y_1-y_4)^{-d} \\
&\qquad\qquad\qquad\qquad\qquad\qquad\qquad\qquad\qquad
\int_0^{y_4}(y_2-y_5)^{-d}(y_3-y_5)^{-d}
dy_5\, dy_4\, dy_3\,  dy_2\, dy_1 \\
&c_5(8)=\int_0^1\int_0^{y_1}\int_0^{y_2}(y_2-y_3)^{-d}\int_0^{y_3}
(y_3-y_4)^{-d}(y_1-y_4)^{-d} \\
&\qquad\qquad\qquad\qquad\qquad\qquad\qquad\qquad\qquad
\int_0^{y_4}(y_1-y_5)^{-d}(y_2-y_5)^{-d}dy_5\, dy_4\, dy_3\,  dy_2\, dy_1 \\
&c_5(9)=\int_0^1\int_0^{y_1}(y_1-y_2)^{-d}\int_0^{y_2}(y_1-y_3)^{-d}\int_0^{y_3}
(y_3-y_4)^{-d} \\
&\qquad\qquad\qquad\qquad\qquad\qquad\qquad\qquad\qquad
\int_0^{y_4}(y_2-y_5)^{-d}(y_4-y_5)^{-d}dy_5\, dy_4\, dy_3\,  dy_2\, dy_1 \\
&c_5(10)=\int_0^1\int_0^{y_1}\int_0^{y_2}(y_1-y_3)^{-d}(y_2-y_3)^{-d|}\int_0^{y_3}
(y_1-y_4)^{-d} \\
&\qquad\qquad\qquad\qquad\qquad\qquad\qquad\qquad\qquad
\int_0^{y_4}(y_2-y_5)^{-d}(y_4-y_5)^{-d}dy_5\, dy_4\, dy_3\,  dy_2\, dy_1 \\
&c_5(11)=\int_0^1\int_0^{y_1}\int_0^{y_2}(y_2-y_3)^{-d}\int_0^{y_3}
(y_1-y_4)^{-d}(y_2-y_4)^{-d} \\
&\qquad\qquad\qquad\qquad\qquad\qquad\qquad\qquad\qquad
\int_0^{y_4}(y_1-y_5)^{-d}(y_3-y_5)^{-d}dy_5\, dy_4\, dy_3\,  dy_2\, dy_1 \\
&c_5(12)=\int_0^1\int_0^{y_1}\int_0^{y_2}(y_1-y_3)^{-d}(y_2-y_3)^{-d}\int_0^{y_3}
(y_2-y_4)^{-d}\\
&\qquad\qquad\qquad\qquad\qquad\qquad\qquad\qquad\qquad
\int_0^{y_4}(y_1-y_5)^{-d}(y_4-y_5)^{-d}dy_5\, dy_4\, dy_3\,  dy_2\, dy_1 
\end{align*}
and every integral occurs ten times. Then for the fifth cumulant $c_5$ we get
\begin{equation}
c_5=10\sum_{i=1}^{12}c_5(i)
\label{53.3}
\end{equation}

\subsection{Derivation of $c_5(1)$}
For $c_5(1)$ we have
\begin{multline*}
c_5(1)=\int_0^1\int_0^{y_1}(y_1-y_2)^{-d}\int_0^{y_2}(y_2-y_3)^{-d}
\int_0^{y_3}(y_3-y_4)^{-d} \\
\int_0^{y_4}(y_4-y_5)^{-d}(y_1-y_5)^{-d}dy_5\, dy_4\, dy_3\,  dy_2\, dy_1	
\end{multline*}
The integral for $y_5$ gives
\[
\int_0^{y_4}(y_4-y_5)^{-d}(y_1-y_5)^{-d}dy_5=
\dfrac{1}{(1-d)}(y_1)^{-d}(y_4)^{1-d}\hyp21{1,d}{2-d}{\dfrac{y_4}{y_1}}
\]
The integral over $y_4$ gives
\begin{multline*}
\dfrac{1}{(1-d)}(y_1)^{-d}\int_0^{y_3}(y_3-y_4)^{-d}(y_4)^{1-d}
\hyp21{1,d}{2-d}{\dfrac{y_4}{y_1}}dy_4= \\
=\dfrac{\Gamma(1-d)^2}{\Gamma(3-2d)}(y_1)^{-d}(y_3)^{2-2d}
\hyp21{1,d}{3-2d}{\dfrac{y_3}{y_1}}
\end{multline*}

The integral over $y_3$ gives
\begin{multline*}
\dfrac{\Gamma(1-d)^2}{\Gamma(3-2d)}(y_1)^{-d}\int_0^{y_2}(y_2-y_3)^{-d}(y_3)^{2-d}
\hyp21{1,d}{3-2d}{\dfrac{y_3}{y_1}}dy_3= \\
=\dfrac{\Gamma(1-d)^3}{\Gamma(4-3d)}(y_1)^{-d}(y_2)^{3-3d}
\hyp21{1,d}{4-3d}{\dfrac{y_2}{y_1}}
\end{multline*}
The integral over $y_2$ gives
\begin{multline*}
\dfrac{\Gamma(1-d)^3}{\Gamma(4-3d)}(y_1)^{-d}
\int_0^{y_1}(y_1-y_2)^{-d}(y_2)^{3-3d}\hyp21{1,d}{4-3d}{\dfrac{y_2}{y_1}}dy_2= \\
=\dfrac{\Gamma(1-d)^4}{\Gamma(5-4d)}(y_1)^{4-5d}\hyp21{1,d}{5-4d}{1}
\end{multline*}
The integral over $y_1$ gives
\[
\dfrac{\Gamma(1-d)^4\Gamma(4-5d)}{\Gamma(4-4d)\Gamma(5-5d)}\int_0^1(y_1)^{4-5d}dy_1=
\dfrac{\Gamma(1-d)^4\Gamma(4-5d)}{\Gamma(4-4d)\Gamma(5-5d)}\dfrac{1}{(5-5d)}
\]
Then for $c_5(1)$ we get at last
\[
c_5(1)=\dfrac{\Gamma(1-d)^4\Gamma(4-5d)}{\Gamma(4-4d)\Gamma(6-5d)}
\]

\subsection{Derivation of $c_5(2)$}
For $c_5(2)$ we have
\begin{multline*}
	c_5(2)=\int_0^1\int_0^{y_1}(y_1-y_2)^{-d}\int_0^{y_2}(y_2-y_3)^{-d}
	\int_0^{y_3}(y_1-y_4)^{-d} \\
	\qquad\qquad\qquad\qquad\qquad\qquad\qquad\qquad\qquad
	\int_0^{y_4}(y_4-y_5)^{-d}(y_3-y_5)^{-d}dy_5\, dy_4\, dy_3\,  dy_2\, dy_1
\end{multline*}
The integral for $y_5$ gives
\[
\int_0^{y_4}(y_4-y_5)^{-d}(y_3-y_5)^{-d}dy_5=
\dfrac{1}{(1-d)}(y_3)^{-d}(y_4)^{1-d}\hyp21{1,d}{2-d}{\dfrac{y_4}{y_3}}
\]
Writing the hypergeometric function as a summation and interchange the summation and the integral gives
\begin{multline*}
\dfrac{1}{(1-d)}\int_0^{y_3}(y_3)^{-d}(y_4)^{1-d}
\hyp21{1,d}{2-d}{\dfrac{y_4}{y_3}}dy_4= \\
=\dfrac{1}{(1-d)}\dfrac{\Gamma(2-d)}{\Gamma(3-d)}(y_1)^{-d}
\sum_{k=0}^\infty\dfrac{(d)_k}{(3-d)_k}(y_3)^{2-2d}
\hyp21{d,2-d+k}{3-d+k}{\dfrac{y_3}{y_1}}
\end{multline*}
Writing the hypergeometric function as a summation and interchange the summation and the integral gives
\begin{multline*}
\dfrac{\Gamma(1-d)}{\Gamma(3-d)}
\sum_{k=0}^\infty\dfrac{(d)_k}{(3-d)_k}
\sum_{j=0}^\infty\dfrac{(d)_j(2-d+k)_j}{(3-d+k)_j}\dfrac{1}{j!}(y_1)^{-j-d}
\int_0^{y_2}(y_3)^{2-2d+j}(y_2-y_3)^{-d}dy_3= \\
=\dfrac{\Gamma(1-d)^2}{\Gamma(3-d)}
\sum_{k=0}^\infty\dfrac{(d)_k}{(3-d)_k}
\sum_{j=0}^\infty\dfrac{(d)_j(2-d+k)_j}{(3-d+k)_j}\dfrac{1}{j!}(y_1)^{-j-d}
\dfrac{\Gamma(3-2d+j)}{\Gamma(4-3d+j)}(y_2)^{3-3d+j}
\end{multline*}
The integral over $y_2$ gives
\begin{align}
&\dfrac{\Gamma(1-d)^2}{\Gamma(3-d)}
\sum_{k=0}^\infty\dfrac{(d)_k}{(3-d)_k}
\sum_{j=0}^\infty\dfrac{(d)_j(2-d+k)_j}{(3-d+k)_j}\dfrac{1}{j!}(y_1)^{-j-d}
\dfrac{\Gamma(3-2d+j)}{\Gamma(4-3d+j)} \nonumber \\
&\qquad\qquad\qquad\qquad\qquad\qquad\qquad\qquad\qquad\qquad\qquad\qquad
\int_0^{y_1}(y_2)^{3-3d+j}(y_1-y_2)^{-d}dy_2= \nonumber \\
&=\dfrac{\Gamma(1-d)^2}{\Gamma(3-d)}\dfrac{\Gamma(3-2d)}{\Gamma(5-4d)}
\sum_{k=0}^\infty\dfrac{(d)_k}{(3-d)_k}
\sum_{j=0}^\infty\dfrac{(d)_j(3-2d)_j(2-d+k)_j}{(5-4d)_j(3-d+k)_j}
\dfrac{1}{j!}(y_1)^{4-5d}
\label{53.2.1}
\end{align}

The next steps should be done 
\begin{description}
\item[-] Integrate \eqref{53.2.1} over $y_1$.
\item[-] Apply Thomae transformation \eqref{55.1}.
\item[-] Interchange summations and do summation over $k$.
\item[-] Apply Thomae transformation \eqref{55.1}.
\item[-] Write the summation as a hypergeometric function.
\end{description}

The result is
\[
c_5(2)=\dfrac{\Gamma(1-d)^2}{(1-d)}
\dfrac{\Gamma(4-5d)\Gamma(2-2d)}{\Gamma(6-5d)\Gamma(4-4d)}
\hyp32{d,1-d,2-2d}{2-d,4-4d}{1}
\]

\subsection{Derivation of $c_5(3)$}
For $c_5(3)$ we have
\begin{multline*}
c_5(3)=\int_0^1\int_0^{y_1}(y_1-y_2)^{-d}\int_0^{y_2}(y_1-y_3)^{-d}
\int_0^{y_3}(y_2-y_4)^{-d} \\
\int_0^{y_4}(y_3-y_5)^{-d}(y_4-y_5)^{-d}dy_5\, dy_4\, dy_3\,  dy_2\, dy_1
\end{multline*}
Using \eqref{1.1} and using the transformation $y_4=(y_3-y_5)z+y_5$ gives
\begin{multline*}
c_5(3)=\int_0^1\int_0^{y_1}(y_1-y_2)^{-d}\int_0^{y_2}(y_1-y_3)^{-d}
\int_0^{y_3}(y_3-y_5)^{-d} \\
\int_0^1 z^{-d}\left(1-\dfrac{y_3-y_5}{y_2-y_5}z\right)^{-d}dz\, dy_5\, dy_3\,  dy_2\, dy_1
\end{multline*}
The integral over $z$ gives
\[
\int_0^1z^{-d}\left(1-\dfrac{y_{3}-y_{5}}{y_{2}-y_{5}}z\right)^{-d}dz=
\dfrac{1}{(1-d)}(y_{3}-y_{5})^{1-d}(y_{2}-y_{5}) ^{-d}
\hyp21{1-d,d}{2-d}{\dfrac{y_{3}-y_{5}}{y_{2}-y_{5}}}
\]
After writing the hypergeometric function as a summation the integral over $y_5$ gives
\begin{multline*}
\dfrac{1}{(1-d)}\int_0^{y_{3}}(y_{3}-y_{5})^{1-2d}( y_{2}-y_{5})^{-d}
\hyp21{1-d,d}{2-d}{\dfrac{y_{3}-y_{5}}{y_{2}-y_{5}}}dy_5= \\
=\dfrac{1}{(1-d)}\dfrac{\Gamma(2-2d)}{\Gamma(3-2d)}
\sum_{k=0}^\infty\dfrac{(1-d)_{k}(d)_{k}}{(2-d)_{k}}\dfrac{(2-2d)_{k}}{(3-2d)_{k}}
\dfrac{1}{k!}(y_{2}) ^{-d-k}(y_{3})^{2-2d+k}
\sum_{j=0}^\infty\dfrac{(d+k)_j}{(3-2d+k)_j}\left(\dfrac{y_{3}}{y_{2}}\right)^j
\end{multline*}
The integral over $y_3$ gives
\begin{align*}
&\dfrac{1}{(1-d)}\dfrac{\Gamma(2-2d)}{\Gamma(3-2d)}
\sum_{k=0}^\infty\dfrac{(1-d)_{k}(d)_{k}}{(2-d)_{k}}\dfrac{(2-2d)_{k}}{(3-2d)_{k}}
\dfrac{1}{k!}
\sum_{j=0}^\infty\dfrac{(d+k)_j}{(3-2d+k)_j}(y_{2}) ^{-d-k-j} \\
&\qquad\qquad\qquad\qquad\qquad\qquad\qquad\qquad\qquad\qquad\qquad\qquad\qquad
\int_0^{y_2}(y_{3})^{2-2d+k+j}(y_1-y_3)^{-d}dy_3= \\
&=\dfrac{1}{(1-d)}\dfrac{\Gamma(2-2d)}{\Gamma(3-2d)}
\sum_{k=0}^\infty\dfrac{(1-d)_{k}(d)_{k}}{(2-d)_{k}}\dfrac{(2-2d)_{k}}{(3-2d)_{k}}
\dfrac{1}{k!}
\sum_{j=0}^\infty\dfrac{(d+k)_j}{(3-2d+k)_j}(y_{2}) ^{-d-k-j} \\
&\qquad\qquad\qquad\qquad\qquad\qquad\qquad\qquad
\dfrac{1}{(3-2d+j+k)}(y_1)^{-d}(y_2)^{3-3d}\hyp21{d,3-2d+j+k}{4-2d+j+k}{\dfrac{y_2}{y_1}}
\end{align*}
The integral over $y_2$ gives
\begin{align*}
&\dfrac{1}{(1-d)}\dfrac{\Gamma(2-2d)}{\Gamma(4-2d)}
\sum_{k=0}^\infty\dfrac{(1-d)_k(d)_k}{(2-d)_k}\dfrac{(2-2d)_k}{(4-2d)_k}\dfrac{1}{k!}
\sum_{j=0}^\infty\dfrac{(d+k)_j}{(4-2d+k)_j}(y_{1})^{-d} \\
&\qquad\qquad\qquad\qquad\qquad\qquad\qquad\qquad
\int_0^{y_{1}}(y_{2})^{3-3d}(y_{1}-y_{2})^{-d}
\hyp21{d,3-2d+j+k}{4-2d+j+k}{\dfrac{y_2}{y_1}}dy_{2}= \\
&=\dfrac{\Gamma(1-d)}{(1-d)}\dfrac{\Gamma(2-2d)}{\Gamma(4-2d)}
\dfrac{\Gamma(4-3d)}{\Gamma(5-4d)}
\sum_{k=0}^\infty\dfrac{(1-d)_k(d)_k}{(2-d)_k}\dfrac{(2-2d)_k}{(4-2d)_k}\dfrac{1}{k!}
\sum_{j=0}^\infty\dfrac{(d+k)_j}{(4-2d+k)_j} \\
&\qquad\qquad\qquad\qquad\qquad\qquad\qquad\qquad\qquad\qquad\qquad\quad
\hyp32{d,4-3d,3-2d+j+k}{5-4d,4-2d+j+k}{1}(y_{1})^{4-5d}
\end{align*}
The integral over $y_1$ gives
\begin{multline*}
c_5(3)=\dfrac{\Gamma(1-d)}{(1-d)(5-5d)}\dfrac{\Gamma(2-2d)}{\Gamma(4-2d)}
\dfrac{\Gamma(4-3d)}{\Gamma(5-4d)}
\sum_{k=0}^\infty\dfrac{(1-d)_k(d)_k}{(2-d)_k}\dfrac{(2-2d)_k}{(4-2d)_k}\dfrac{1}{k!}
\sum_{j=0}^\infty\dfrac{(d+k)_j}{(4-2d+k)_j} \\
\hyp32{d,4-3d,3-2d+j+k}{5-4d,4-2d+j+k}{1}
\end{multline*}
To work out the summations we do the following steps
\begin{description}
\item[-] Apply the Thomae transformation \eqref{55.1} and write the hypergeometric function as a summation over $m$.
\item[-] Work out the summation over $j$.
\item[-] The hypergeometric function can written as a fraction of Gamma functions. The result is a hypergeometric function.
\item[-] Apply the Thomae transformation \eqref{55.1} and write the hypergeometric function as a summation over $m$.
\item[-] Work out the summation over $k$.
\item[-] The result is the sum of two summations which can be written as the sum of two hypergeometric functions.
\end{description}
The result is
\begin{multline*}
c_5(3)=\dfrac{\Gamma(1-d)^{3}}{(1-d)\Gamma(2-2d)}\dfrac{\Gamma(4-5d)}{\Gamma(4-4d)}
\dfrac{\Gamma(3-3d)}{\Gamma(6-5d)}\hyp32{d,1-d,3-3d}{2-d,4-4d}{1}- \\
-\dfrac{\Gamma(1-d)^2}{(1-d)}
\dfrac{\Gamma(4-5d)\Gamma(2-2d)}{\Gamma(6-5d)\Gamma(6-5d)}
\hyp32{d,1-d,2-2d}{2-d,4-4d}{1}
\end{multline*}

\subsection{Derivation of $c_5(4)$}
For $c_5(4)$ we have
\begin{multline*}
c_5(4)=\int_0^1\int_0^{y_1}(y_1-y_2)^{-d}\int_0^{y_2}\int_0^{y_3}
(y_2-y_4)^{-d}(y_3-y_4)^{-d} \\
\int_0^{y_4}(y_3-y_5)^{-d}(y_1-y_5)^{-d}dy_5\, dy_4\, dy_3\,  dy_2\, dy_1
\end{multline*}
Using \eqref{1.1} and using the transformation $y_4=(y_3-y_5)z+y_5$ gives
\begin{multline*}
c_5(4)=\int_0^1\int_0^{y_1}(y_1-y_2)^{-d}\int_0^{y_2}\int_0^{y_3}(y_2-y_5)^{-d}
(y_3-y_5)^{1-2d}(y_1-y_5)^{-d} \\
\int_0^1(1-z)^{-d}\left( 1-\dfrac{y_3-y_5}{y_2-y_5}z\right)^{-d}dzdy_5dy_3dy_2dy_1
\end{multline*}
The integral over $z$ gives a hypergeometric function. Writing this as a summation gives
\begin{equation}
\int_0^1(1-z)^{-d}\left(1-\dfrac{y_3-y_5}{y_2-y_5}z\right)^{-d}dz=
\dfrac{1}{(1-d)}\sum_{k=0}^\infty\dfrac{(d)_k}{(2-d)_k}
\left(\dfrac{y_3-y_5}{y_2-y_5}\right)^k
\label{53.4.1}
\end{equation}
For the integral over $y_5$ we write the factor $(y_1-y_5)^{-d}$ as a summation and get after integration
\begin{align*}
&\dfrac{1}{(1-d)}\sum_{k=0}^\infty\dfrac{(d)_k}{(2-d)_k}
\sum_{j=0}^\infty(y_1)^{-d-j}\dfrac{(d)_j}{j!}
\int_0^{y_3}(y_2-y_5)^{-d-k}(y_3-y_5)^{1-2d+k}(y_5)^j dy_5 \\
&=\dfrac{1}{(1-d)}\sum_{k=0}^\infty\dfrac{(d)_k\Gamma(2-2d+k)}{(2-d)_k}
\sum_{j=0}^\infty(y_1)^{-d-j}\dfrac{(d)_j}{\Gamma(3-2d+j+k)}(y_2)^{-d-k}(y_3)^{2-2d+j+k} \\
&\qquad\qquad\qquad\qquad\qquad\qquad\qquad\qquad\qquad\qquad\qquad\qquad\qquad
\hyp21{1+j,d+k}{3-2d+j+k}{\dfrac{y_{3}}{y_{2}}}
\end{align*}
For the integral over $y_3$ we get
\begin{align*}
&\dfrac{1}{(1-d)}\sum_{k=0}^\infty\dfrac{(d)_k\Gamma(2-2d+k)}{(2-d)_k}
\sum_{j=0}^\infty(y_1)^{-d-j}\dfrac{(d)_j}{\Gamma(3-2d+j+k)}(y_2)^{-d-k} \\
&\qquad\qquad\qquad\qquad\qquad\qquad\qquad\qquad\qquad
\int_0^{y_2}(y_3)^{2-2d+j+k}\hyp21{1+j,d+k}{3-2d+j+k}{\dfrac{y_{3}}{y_{2}}}dy_3 \\
&=\dfrac{1}{(1-d)}\dfrac{\Gamma(3-3d)\Gamma(2-2d)}{\Gamma(3-2d)\Gamma(4-3d)}
\sum_{k=0}^\infty\dfrac{(d)_k(2-2d)_k}{(2-d)_k(3-2d)_k}
\sum_{j=0}^\infty(y_1)^{-d-j}\dfrac{(d)_j}{(4-3d)_j}(y_2)^{3-3d+j}
\end{align*}
For the integral over $y_2$ we get
\begin{align*}
&\dfrac{1}{(1-d)}\dfrac{\Gamma(3-3d)\Gamma(2-2d)}{\Gamma(3-2d)\Gamma(4-3d)}
\sum_{k=0}^\infty\dfrac{(d)_k(2-2d)_k}{(2-d)_k(3-2d)_k}
\sum_{j=0}^\infty(y_1)^{-d-j}\dfrac{(d)_j}{(4-3d)_j} \\
&\qquad\qquad\qquad\qquad\qquad\qquad\qquad\qquad\qquad\qquad\qquad\qquad\qquad
\int_{0}^{y_1}(y_1-y_2)^{-d}(y_2)^{3-3d+j}dy_2 \\
&=\dfrac{\Gamma(1-d)}{2(1-d)^2}\dfrac{\Gamma(3-3d)\Gamma(4-5d)}{\Gamma(4-4d)\Gamma(5-5d)}
\sum_{k=0}^{\infty}\dfrac{(d)_k(2-2d)_k}{(2-d)_k(3-2d)_k}(y_1)^{4-5d}
\end{align*}
Integration over $y_1$ gives at last
\[
c_{5}(4)=\dfrac{\Gamma(1-d)}{2(1-d)^2}\dfrac{\Gamma(3-3d)\Gamma(4-5d)}
{\Gamma(4-4d)\Gamma(6-5d)}\hyp32{1,d,2-2d}{2-d,3-2d}{1}
\]

\subsection{Derivation of $c_5(5)$}
For $c_5(5)$ we have
\begin{multline*}
c_5(5)=\int_0^1\int_0^{y_1}(y_1-y_2)^{-d}\int_0^{y_2}\int_0^{y_3}
(y_1-y_4)^{-d}(y_3-y_4)^{-d} \\
\int_0^{y_4}(y_3-y_5)^{-d}(y_1-y_5)^{-d}dy_5\, dy_4\, dy_3\,  dy_2\, dy_1
\end{multline*}
Using \eqref{1.1} and using the transformation $y_4=(y_3-y_5)z+y_5$ gives
\begin{multline*}
c_5(5)=\int_0^1\int_0^{y_1}(y_1-y_2)^{-d}\int_0^{y_2}\int_0^{y_3}(y_2-y_5)^{-d}
(y_3-y_5)^{1-2d}(y_1-y_5)^{-d} \\
\int_0^1(1-z)^{-d}\left( 1-\dfrac{y_3-y_5}{y_1-y_5}z\right)^{-d}dz\, dy_5\, dy_3\, dy_2\, dy_1
\end{multline*}
The integral over $z$ gives a hypergeometric function. Writing this as a summation gives
\begin{equation}
\int_0^1(1-z)^{-d}\left(1-\dfrac{y_3-y_5}{y_1-y_5}z\right)^{-d}dz=
\dfrac{1}{(1-d)}\sum_{k=0}^\infty\dfrac{(d)_k}{(2-d)_k}
\left(\dfrac{y_3-y_5}{y_1-y_5}\right)^k
\label{53.5.1}
\end{equation}
For the integral over $y_5$ we write the factor $(y_2-y_5)^{-d}$ as a summation and get
\begin{align*}
&\dfrac{1}{(1-d)}\sum_{k=0}^\infty\dfrac{(d)_k}{(2-d)_k}
\sum_{j=0}^\infty\dfrac{(d)_j}{j!}(y_2)^{-d-j}
\int_0^{y_3}(y_3-y_5)^{1-2d+k}(y_1-y_5)^{-d-k}(y_5)^j dy_5 \\
&=\dfrac{1}{(1-d)}\sum_{k=0}^\infty\dfrac{(d)_k\Gamma(2-2d+k)}{(2-d)_k}
\sum_{j=0}^\infty\dfrac{(d)_j}{\Gamma(3-2d+j+k)}(y_1)^{-d-k}(y_2)^{-d-j}(y_3)^{2-2d+j+k}\\
&\qquad\qquad\qquad\qquad\qquad\qquad\qquad\qquad\qquad\qquad\qquad\qquad\qquad\qquad
\hyp21{1+j,d+k}{3-2d+j+k}{\dfrac{y_{3}}{y_{1}}}
\end{align*}
For the integral over $y_3$ we get
\begin{align*}
&\dfrac{1}{(1-d)}\sum_{k=0}^\infty\dfrac{(d)_k\Gamma(2-2d+k)}{(2-d)_k}
\sum_{j=0}^\infty\dfrac{(d)_j}{\Gamma(3-2d+j+k)}(y_1)^{-d-k}(y_2)^{-d-j}\\
&\qquad\qquad\qquad\qquad\qquad\qquad\qquad\qquad\qquad\qquad
\int_0^{y_2}(y_3)^{2-2d+j+k}\hyp21{1+j,d+k}{3-2d+j+k}{\dfrac{y_{3}}{y_{1}}}dy_3 
\end{align*}
\begin{align*}
&=\dfrac{1}{(1-d)}\sum_{k=0}^\infty\dfrac{(d)_k\Gamma(2-2d+k)}{(2-d)_k}
\sum_{j=0}^\infty\dfrac{(d)_j}{\Gamma(4-2d+j+k)}(y_1)^{-d-k}(y_2)^{3-3d+k} \\
&\qquad\qquad\qquad\qquad\qquad\qquad\qquad\qquad\qquad\qquad\qquad\qquad\qquad\qquad\quad
\hyp21{1+j,d+k}{4-2d+j+k}{\dfrac{y_2}{y_1}}
\end{align*}
For the integral over $y_2$ we get
\begin{align*}
&\dfrac{1}{(1-d)}\sum_{k=0}^\infty\dfrac{(d)_k\Gamma(2-2d+k)}{(2-d)_k}
\sum_{j=0}^\infty\dfrac{(d)_j}{\Gamma(4-2d+j+k)}(y_1)^{-d-k}(y_2)^{3-3d+k} \\
&\qquad\qquad\qquad\qquad\qquad\qquad\qquad\qquad\qquad\qquad
\int_0^{y_1}(y_1-y_2)^{-d}\hyp21{1+j,d+k}{4-2d+j+k}{\dfrac{y_2}{y_1}}dy_2 
\end{align*}
\begin{align*}
&=\dfrac{\Gamma(1-d)}{(1-d)}\sum_{k=0}^\infty\dfrac{(d)_k\Gamma(2-2d+k)}
{(2-d)_k\Gamma(4-2d+k)}\dfrac{\Gamma(4-3d+k)}{\Gamma(5-4d+k)}
\sum_{j=0}^\infty\dfrac{(d)_j}{(4-2d+k)_j}(y_1)^{4-5d} \\
&\qquad\qquad\qquad\qquad\qquad\qquad\qquad\qquad\qquad\qquad\qquad\qquad
\hyp32{1+j,4-3d+k,d+k}{5-4d+k,4-2d+j+k}{1}
\end{align*}
The integral over $y_1$ gives
\begin{align*}
c_5(5)&=\dfrac{\Gamma(1-d)}{(1-d)(5-5d)}
\dfrac{\Gamma(2-2d)\Gamma(4-3d)}{\Gamma(4-2d)\Gamma(5-4d)}
\sum_{k=0}^\infty\dfrac{(d)_k(2-2d)_k(4-3d)_k}{(2-d)_k(4-2d)_k}(5-4d)_k \\
&\qquad\qquad\qquad\qquad\qquad\qquad\qquad\quad
\sum_{j=0}^\infty\dfrac{(d)_j}{(4-2d+k)_j}\hyp32{1+j,4-3d+k,d+k}{5-4d+k,4-2d+j+k}{1}
\end{align*}
To work out the summations we do the following steps
\begin{description}
	\item[-] Write the hypergeometric function as a summation over $m$.
	\item[-] Work out the summation over $j$.
	\item[-] The hypergeometric function can written as a fraction of Gamma functions. The result is a summation over $m$.
	\item[-] Work out the summation over $m$.
	\item[-] There remains a summation over $k$ which can be written as a hypergeometric function.
\end{description}
The result is
\[
c_5(5)=\dfrac{\Gamma(1-d)}{2(1-d)^2}
\dfrac{\Gamma(3-3d)\Gamma(4-5d)}{\Gamma(4-4d)\Gamma(6-5d)}
\hyp43{1,d,2-2d,3-3d}{2-d,3-2d,4-4d}{1}
\]

\subsection{Derivation of $c_5(6)$}
For $c_5(6)$ we have
\begin{multline*}
c_5(6)=\int_0^1\int_0^{y_1}\int_0^{y_2}(y_1-y_3)^{-d}\int_0^{y_3}
(y_3-y_4)^{-d}(y_2-y_4)^{-d} \\
\int_0^{y_4}(y_2-y_5)^{-d}(y_1-y_5)^{-d}dy_5\, dy_4\, dy_3\,  dy_2\, dy_1	
\end{multline*}
Using \eqref{1.1} and the transformation $y_4=(y_3-y_5)z+y_5$ gives
\begin{multline*}
c_5(6)=\int_0^1\int_0^{y_1}\int_0^{y_2}(y_1-y_3)^{-d}
\int_0^{y_3}(y_2-y_5)^{-2d}(y_1-y_5)^{-d}(y_3-y_5)^{1-d} \\
\int_0^1(1-z)^{-d}\left(1-\dfrac{y_3-y_5}{y_2-y_5}z\right)^{-d}dz\, dy_5\, dy_3\,  dy_2\, dy_1	
\end{multline*}
For the integral over $z$ we use \eqref{53.4.1}. The result gives a hypergeometric function. We write the factor $(y_1-y_5)^{-d}$ and the hypergeometric function as summations . After integration we get
\begin{multline*}
\dfrac{1}{(1-d)}\sum_{k=0}^\infty\dfrac{(d)_k}{(2-d)_k}
\sum_{j=0}^\infty\dfrac{(d)_j}{j!}(y_1)^{-d-j}
\int_0^{y_3}(y_2-y_5)^{-2d-k}(y_3-y_5)^{1-d+k}(y_5)^j dy_5= \\
=\dfrac{\Gamma(1-d)}{\Gamma(3-d)}\sum_{k=0}^\infty\dfrac{(d)_k}{(3-d)_k}
\sum_{j=0}^\infty\dfrac{(d)_j}{(3-d+k)_j}(y_1)^{-d-j}(y_2)^{-2d-k}(y_3)^{2-d+j+k}
\hyp21{1+j,2d+k}{3-d+j+k}{\dfrac{y_3}{y_2}}
\end{multline*}
Writing the hypergeometric function as a summation, using the transformations from Remark 5 and integration over $y_1$ gives
\begin{multline*}
c_5(6)=\dfrac{1}{(5-5d)}\dfrac{\Gamma(1-d)}{\Gamma(3-d)}
\sum_{k=0}^\infty\dfrac{(d)_k}{(3-d)_k}\sum_{j=0}^\infty\dfrac{(d)_j}{(3-d+k)_j}
\sum_{i=0}^\infty\dfrac{(1+j)_i(2d+k)_i}{(3-d+j+k)_i}\dfrac{1}{i!} \\
\int_0^1\int_0^1(z_2)^{3-3d+j}(z_3)^{2-d+i+j+k}(1-z_2\, z_3)^{-d}dz_3\, dz_2
\end{multline*}
For the double integral we get
\begin{multline*}
\int_0^1\int_0^1(z_2)^{3-3d+j}(z_3)^{2-d+i+j+k}(1-z_2\, z_3)^{-d}dz_3\, dz_2= \\
=\dfrac{\Gamma(1-d)}{(2d-1+i+k)}\left(\dfrac{\Gamma(4-3d+j)}{\Gamma(5-4d+j)}-
\dfrac{\Gamma(3-d+i+j+k)}{\Gamma(4-2d+i+j+k)}\right)
\end{multline*}
Using the summations consecutively over $i$, $j$ and $k$ gives
\begin{multline*}
c_5(6)=\dfrac{\Gamma(1-d)}{(1-d)}\dfrac{\Gamma(2d-1)}{\Gamma(2d)}
\dfrac{\Gamma(3-3d)\Gamma(4-5d)}{\Gamma(6-5d)\Gamma(4-4d)}\hyp32{1,d,2d-1}{2-d,2d}{1}- \\
-\dfrac{\Gamma(1-d)^2}{(3-2d)}\dfrac{\Gamma(2d-1)}{\Gamma(2d)}
\dfrac{\Gamma(4-5d)}{\Gamma(6-5d)}\hyp32{2d-1,1,d}{2d,3-2d}{1}
\end{multline*}
Application of the Thomae transformations \eqref{55.3} and \eqref{55.5} gives
\begin{align*}
c_5(6)&=\dfrac{\Gamma(1-d)}{(1-d)}\dfrac{\Gamma(2d-1)}{\Gamma(2d)}
\dfrac{\Gamma(3-3d)\Gamma(4-5d)}{\Gamma(6-5d)\Gamma(4-4d)}\hyp32{1,d,2d-1}{2-d,2d}{1}- \\
&-\dfrac{\Gamma(1-d)^3\Gamma(4-5d)}{\Gamma(4-4d)\Gamma(6-5d)}
\dfrac{\Gamma(2-2d)\Gamma(2d-1)}{\Gamma(d)}+ \\
&+\dfrac{\Gamma(1-d)^2}{3(1-d)^2}\dfrac{\Gamma(4-5d)}{\Gamma(6-5d)\Gamma(2-2d)}
\hyp32{1,d,3-3d}{2-d,4-3d}{1}
\end{align*}

\subsection{Derivation of $c_5(7)$}
For $c_5(7)$ we have
\begin{multline*}
c_5(7)=\int_0^1\int_0^{y_1}\int_0^{y_2}(y_1-y_3)^{-d}\int_0^{y_3}
(y_2-y_4)^{-d}(y_1-y_4)^{-d} \\
\int_0^{y_4}(y_2-y_5)^{-d}(y_3-y_5)^{-d}dy_5\, dy_4\, dy_3\,  dy_2\, dy_1
\end{multline*}
Using \eqref{1.1} and using the transformation $y_4=(y_3-y_5)z+y_5$ gives
\begin{multline*}
c_5(7)=\int_0^1\int_0^{y_1}\int_0^{y_2}(y_1-y_3)^{-d}
\int_0^{y_3}(y_2-y_5)^{-2d}(y_3-y_5)^{1-d}(y_1-y_5)^{-d} \\
\int_0^1\left(1-\dfrac{y_3-y_5}{y_2-y_5}z\right)^{-d}
\left(1-\dfrac{y_3-y_5}{y_1-y_5}z\right)^{-d}dz\, dy_5\, dy_3\, dy_2\, dy_1
\end{multline*}
Making use of the known integral
\begin{multline}
\int_0^1(1-p\, z)^{-d}(1-q\, z)^{-d}dz= \\
=\dfrac{1}{(1-d)}\dfrac{1}{p}\hyp21{1,d}{2-d}{\dfrac{q}{p}}
-\dfrac{1}{(1-d)}\dfrac{1}{p}(1-p)^{1-d}(1-q)^{-d}
\hyp21{1,d}{2-d}{\dfrac{q(1-p)}{p(1-q)}}
\label{53.7.1}
\end{multline}
we get
\begin{multline*}
\int_0^1\left(1-\dfrac{y_3-y_5}{y_2-y_5}z\right)^{-d}
\left(1-\dfrac{y_3-y_5}{y_1-y_5}z\right)^{-d}dz 
=\dfrac{1}{(1-d)}\dfrac{(y_2-y_5)}{(y_3-y_5)}
\hyp21{1,d}{2-d}{\dfrac{y_2-y_5}{y_1-y_5}}- \\
-\dfrac{1}{\left( 1-d\right) }%
\dfrac{(y_2-y_5)}{(y_3-y_5)}\left(\dfrac{y_2-y_3}{y_2-y_5}\right)^{1-d}
\left(\dfrac{y_1-y_3}{y_1-y_5}\right)^{-d}\hyp21{1,d}{2-d}{\dfrac{y_2-y_3}{y_1-y_3}}
\end{multline*}
After substitution in $c_5(7)$ with
\[
\int_0^{y_3}(y_3-y_5)^{-d}(y_2-y_5)^{-d}dy_5=\dfrac{1}{(1-d)}
\hyp21{1,d}{2-d}{\dfrac{y_3}{y_2}}
\]
we get \quad $c_5(7)=I_1-I_2$ with
\begin{multline*}
I_1=\dfrac{1}{(1-d)}\int_0^1\int_0^{y_1}\int_0^{y_2}
(y_1-y_3)^{-d}\int_0^{y_3}(y_1-y_5)^{-d}(y_2-y_5)^{1-2d}(y_3-y_5)^{-d} \\
\hyp21{1,d}{2-d}{\dfrac{y_2-y_5}{y_1-y_5}}dy_{5}dy_{3}dy_{2}dy_{1}
\end{multline*}
\begin{multline*}
I_2=\dfrac{1}{(1-d)^2}\int_0^1\int_0^{y_1}\int_0^{y_2}
(y_2)^{-d}(y_3)^{1-d}(y_1-y_3)^{-2d}(y_2-y_3)^{1-d}
\hyp21{1,d}{2-d}{\dfrac{y_3}{y_2}} \\
\hyp21{1,d}{2-d}{\dfrac{y_2-y_3}{y_1-y_3}}dy_{3}dy_{2}dy_{1}
\end{multline*}

\subsubsection{Derivation of $I_1$.}
Writing the factor $(y_1-y_5)^{-d}$ and the hypergeometric function as summations we get
\begin{multline*}
I_1=\dfrac{1}{(1-d)}\sum_{k=0}^\infty\sum_{j=0}^\infty
\dfrac{(d)_k}{(2-d)_k}\dfrac{(d+k)_j}{j!} \\
\int_0^1\int_0^{y_1}\int_0^{y_2}(y_1-y_3)^{-d}\int_0^{y_3}
(y_5)^j(y_2-y_5)^{1-2d+k}(y_3-y_5)^{-d}dy_5\, dy_3\, dy_2\, dy_1
\end{multline*}
Integration over $y_5$ gives a hypergeometric function. Writing this function as a summation over $i$ gives
\begin{multline*}
I_1=\dfrac{\Gamma(1-d)}{(1-d)}\sum_{k=0}^\infty\sum_{j=0}^\infty\sum_{i=0}^\infty
\dfrac{(d)_k(d+k)_j}{(2-d)_k\Gamma(2-d+j)}\dfrac{(1+j)_i(2d-1-k)_i}{(2-d+j)_i\, i!} \\
\int_0^1\int_0^{y_1}\int_0^{y_2}(y_1)^{-d-j-k}(y_2)^{1-2d-i+k}(y_3)^{1-d+i+j}(y_1-y_3)^{-d}dy_3\, dy_2\, dy_1
\end{multline*}
The triple integral can simply be done and we get
\begin{align*}
I_1&=\dfrac{\Gamma(1-d)^2}{(1-d)(5-5d)}\sum_{k=0}^\infty\sum_{j=0}^\infty\sum_{i=0}^\infty
\dfrac{\Gamma(4-3d+j+k)(d)_k(1+j)_i(2d-1-k)_i(d+k)_j}
{(2d-2+i-k)i!\Gamma(2-d_i+j)(2-d)_k\Gamma(5-4d+j+k)}- \\
&-\dfrac{\Gamma(1-d)^2}{(1-d)(5-5d)}\sum_{k=0}^\infty\sum_{j=0}^\infty\sum_{i=0}^\infty
\dfrac{(d)_k(1+j)_i\Gamma(2d-2-k)(2d-2-k)_i(d+k)_j}
{\Gamma(2d-2-k)i!(2-d)_k\Gamma(3-2d+i+j)}
\end{align*}
After doing consecutively the summations over $i$,\, $j$ and $k$ gives at last
\begin{multline*}
I_{1}=\dfrac{\Gamma(1-d)^2}{2(1-d)^2}\dfrac{\Gamma( 4-5d)}{\Gamma(6-5d)\Gamma(2-2d)}
\hyp32{1,d,2-2d}{2-d,3-2d}{1}- \\
-\dfrac{\Gamma(1-d)}{2(1-d)^2}
\dfrac{\Gamma(3-3d)}{\Gamma(4-4d)}\dfrac{\Gamma(4-5d)}{\Gamma(6-5d)}
\hyp43{1,d,2-2d,3-3d}{2-d,3-2d,4-4d}{1}
\end{multline*}

\subsubsection{Derivation of $I_2$.}
Using the transformations of Remark 5, integration over $y_1$ and writing the hypergeometric functions as summations gives
\begin{multline*}
I_2=\dfrac{1}{(1-d)^2(5-5d)}
\sum_{k=0}^\infty\dfrac{(d)_k}{(2-d)_k}\sum_{j=0}^\infty\dfrac{(d)_j}{(	2-d)_j} \\
\int_0^1\int_0^1(z_2)^{3-3d+j}(z_3)^{1-d+k}(1-z_2\, z_3)^{-2d-j}(1-z_3)^{1-d+j}dz_3\, dz_2
\end{multline*}
The integral over $z_2$ is known just like the integral over $z_3$. The result is
\begin{multline*}
I_2=\dfrac{\Gamma(3-3d)}{(1-d)^2(5-5d)}
\sum_{k=0}^\infty\dfrac{(d)_k}{(2-d)_k}\sum_{j=0}^\infty\dfrac{(d)_j}{(2-d)_j}
\dfrac{\Gamma(4-3d+j)}{\Gamma(5-3d+j)}\dfrac{\Gamma(2-d+k)}{\Gamma(5-4d+k)} \\
\hyp32{1,5-5d,2-d+k}{5-3d+j,5-4d+k}{1}
\end{multline*}
Application of the Thomae transformation \eqref{55.1} gives
\begin{multline*}
I_2=\dfrac{\Gamma(3-3d)}{(1-d)^2(5-5d)}
\sum_{k=0}^\infty\dfrac{(d)_k}{(2-d)_k}\sum_{j=0}^\infty\dfrac{(d)_j}{(2-d)_j}
\dfrac{\Gamma(4-3d+j)}{\Gamma(5-3d+j)}\dfrac{\Gamma(2-d+k)}{\Gamma(5-4d+k)} \\
\dfrac{\Gamma(5-3d+j)\Gamma(2-d+j)}{\Gamma(4-3d+j)\Gamma(3-d+j)}
\hyp32{1,3-3d,d+k}{3-d+j,5-4d+k}{1}
\end{multline*}
Writing the hypergeometric function as a summation and after a lot of summations we get at last
\[
I_2=\dfrac{\Gamma(1-d)}{2(1-d)^2}\dfrac{\Gamma(3-3d)}{\Gamma(6-5d)}
\dfrac{\Gamma(4-5d)}{\Gamma(4-4d)}\hyp43{1,d,2-2d,3-3d}{2-d,3-2d,4-4d}{1}
\]
Combining $I_1$ and $I_2$ gives for $c_5(7)$
\begin{multline*}
c_5(7)=\dfrac{\Gamma(1-d)^2}{2(1-d)^2}\dfrac{\Gamma(4-5d)}{\Gamma(6-5d)\Gamma(2-2d)}
\hyp32{1,d,2-2d}{2-d,3-2d}{1}- \\
\dfrac{\Gamma(1-d)}{(1-d)^2}\dfrac{\Gamma(3-3d)}{\Gamma(4-4d)}
\dfrac{\Gamma(4-5d)}{\Gamma(6-5d)}
\hyp43{1,d,2-2d,3-3d}{2-d,3-2d,4-4d}{1}
\end{multline*}

\subsection{Derivation of $c_5(8)$}
For $c_5(8)$ we have
\begin{multline*}
c_5(8)=\int_0^1\int_0^{y_1}\int_0^{y_2}(y_2-y_3)^{-d}\int_0^{y_3}
(y_3-y_4)^{-d}(y_1-y_4)^{-d} \\
\int_0^{y_4}(y_1-y_5)^{-d}(y_2-y_5)^{-d}dy_5\, dy_4\, dy_3\,  dy_2\, dy_1
\end{multline*}
Using \eqref{1.1} and the transformation $y_4=(y_3-y_5)z+y_5$ gives
\begin{multline*}
c_5(8)=\int_0^1\int_0^{y_1}\int_0^{y_2}(y_2-y_3)^{-d}
\int_0^{y_3}(y_1-y_5)^{-2d}(y_2-y_5)^{-d}(y_3-y_5)^{1-d} \\
\int_0^1(1-z)^{-d}\left(1-\dfrac{y_3-y_5}{y_1-y_5}z\right)^{-d}dz\, dy_5\, dy_3\,  dy_2\, dy_1	
\end{multline*}
Using \eqref{53.5.1} for the integral over $z$ and writing the factor $(y_1-y_5)^{-d}$ as a summation we get for the integral over $y_5$
\begin{align*}
&\dfrac{1}{(1-d)}\sum_{k=0}^\infty\dfrac{(d)_k}{(2-d)_k}
\sum_{j=0}^\infty\dfrac{(2d+k)_j}{j!}(y_1)^{-2d-k-j}
\int_0^{y_3}(y_2-y_5)^{-d}(y_3-y_5)^{1-d+k}(y_5)^jdy_5= \\
&=\dfrac{1}{(1-d)}\sum_{k=0}^\infty\dfrac{(d)_k}{(2-d)_k}
\sum_{j=0}^\infty\dfrac{(2d+k)_j\Gamma(2-d+k)}{\Gamma(3-d+j+k)}
(y_1)^{-2d-k-j}(y_2)^{-d}(y_3)^{2-d+j+k} \\
&\qquad\qquad\qquad\qquad\qquad\qquad\qquad\qquad\qquad\qquad\qquad\qquad\qquad\qquad\quad
\hyp21{d,1+j}{3-d+j+k}{\dfrac{y_3}{y_2}}
\end{align*}
Integration over $y_3$ gives
\begin{align*}
&\dfrac{1}{(1-d)}\sum_{k=0}^\infty\dfrac{(d)_k}{(2-d)_k}
\sum_{j=0}^\infty\dfrac{(2d+k)_j\Gamma(2-d+k)}{\Gamma(3-d+j+k)}
(y_1)^{-2d-k-j}(y_2)^{-d} \\
&\qquad\qquad\qquad\qquad\qquad\qquad\qquad\quad
\int_0^{y_2}(y_3)^{2-d+j+k}(y_2-y_3)^{-d}\hyp21{d,1+j}{3-d+j+k}{\dfrac{y_3}{y_2}}dy_3= 
\end{align*}
\begin{align*}
&=\dfrac{\Gamma(1-d)^2\Gamma(3-3d)}{\Gamma(3-2d)}
\sum_{k=0}^\infty\dfrac{(d)_k(3-3d)_k}{(3-2d)_k}
\sum_{j=0}^\infty\dfrac{(2d+k)_j}{\Gamma(4-3d+j+k)}(y_1)^{-2d-k-j}(y_2)^{3-3d+j+k}
\end{align*}
Integration over $y_2$ and $y_1$ gives at last
\[
c_5(8)=\dfrac{\Gamma(1-d)^2}{3(1-d)}\dfrac{\Gamma(4-5d)}{\Gamma(6-5d)\Gamma(3-2d)}
\hyp32{1,d,3-3d}{3-2d,4-3d}{1}
\]

\subsection{Derivation of $c_5(9)$}
For $c_5(9)$ we have
\begin{multline*}
c_5(9)=\int_0^1\int_0^{y_1}(y_1-y_2)^{-d}\int_0^{y_2}(y_1-y_3)^{-d}\int_0^{y_3}
(y_3-y_4)^{-d} \\
\int_0^{y_4}(y_2-y_5)^{-d}(y_4-y_5)^{-d}dy_5\, dy_4\, dy_3\,  dy_2\, dy_1
\end{multline*}
For the integral over $y_5$ we get
\[
\int_0^{y_4}(y_2-y_5)^{-d}(y_4-y_5)^{-d}dy_{5}=
\dfrac{1}{(1-d)}(y_2)^{-d}(y_4)^{1-d}\hyp21{1,d}{2-d}{\dfrac{y_4}{y_2}}
\]
For the integral over $y_4$ we get
\begin{multline*}
\dfrac{1}{(1-d)}(y_2)^{-d}\int_0^{y_3}(y_4)^{1-d}(y_3-y_4)^{-d}
\hyp21{1,d}{2-d}{\dfrac{y_4}{y_2}} dy_4= \\
=\dfrac{1}{(1-d)}(y_2)^{-d}\dfrac{\Gamma(1-d)\Gamma(2-d)}{\Gamma(3-2d)}(y_3)^{2-2d}
\hyp21{1,d}{2-d}{\dfrac{y_3}{y_2}}
\end{multline*}
For the integral over $y_3$ we write the factor $(y_1-y_3)^{-d}$ as a summation and get
\begin{multline*}
\dfrac{\Gamma(1-d)^2}{\Gamma(3-2d)}(y_2)^{-d}
\sum_{k=0}^\infty\dfrac{(d)_k}{k!}(y_1)^{-d-k}\int_0^{y_2}(y_3)^{2-2d+k}
\hyp21{1,d}{2-d}{\dfrac{y_3}{y_2}}dy_3 \\
=\dfrac{\Gamma(1-d)^2}{\Gamma(3-2d)}
\sum_{k=0}^\infty\dfrac{(d)_k}{k!}\dfrac{1}{(3-2d+k)}\hyp32{1,d,3-2d+k}{3-2d,4-2d+k}{1}
(y_1)^{-d-k}(y_2)^{3-3d+k}
\end{multline*}
For the integral over $y_2$ we get
\begin{align*}
&\dfrac{\Gamma(1-d)^2}{\Gamma(3-2d)}
\sum_{k=0}^\infty\dfrac{(d)_k}{k!}\dfrac{1}{(3-2d+k)}\hyp32{1,d,3-2d+k}{3-2d,4-2d+k}{1}
(y_1)^{-d-k} \\
&\qquad\qquad\qquad\qquad\qquad\qquad\qquad\qquad\qquad\qquad\qquad\qquad\qquad
\int_0^{y_1}(y_2)^{3-3d+k}(y_1-y_2)^{-d}dy_2  \\
&=\dfrac{\Gamma(1-d)^2}{\Gamma(3-2d)}
\sum_{k=0}^\infty\dfrac{(d)_k}{k!}\dfrac{1}{(3-2d+k)}\hyp32{1,d,3-2d+k}{3-2d,4-2d+k}{1} \dfrac{\Gamma(4-3d+k)}{\Gamma(5-4d+k)}(y_1)^{4-5d}
\end{align*}
After integration over $y_1$ we get
\[
c_5(9)=\dfrac{\Gamma(1-d)^2}{(5-5d)\Gamma(3-2d)}
\sum_{k=0}^\infty\dfrac{(d)_k}{k!}\dfrac{1}{(3-2d+k)}\hyp32{1,d,3-2d+k}{3-2d,4-2d+k}{1} \dfrac{\Gamma(4-3d+k)}{\Gamma(5-4d+k)}
\]

To work out the summation we do the following steps
\begin{description}
	\item[-] Write the hypergeometric function as a summation over $m$.
	\item[-] Work out the summation over $k$. The result is a summation over $m$ and a hypergeometric function.
	\item[-] Apply the Thomae transformation \eqref{55.1} to the hypergeometric function. 
	\item[-] Write the hypergeometric function as a summation over $k$.
	\item[-] Work out the summation over $m$.
	\item[-] There remains a summation over $k$ which can be written as a hypergeometric function.
\end{description}
The result is
\[
c_5(9)=\dfrac{\Gamma(1-d)^2}{(1-d)}
\dfrac{\Gamma(4-5d)\Gamma(2-2d)}{\Gamma(6-5d)\Gamma(4-4d)}
\hyp32{d,1-d,2-2d}{2-d,4-4d}{1}
\]
This result is the same as the result for $c_5(2)$. However, the starting integrals are different.

\subsection{Derivation of $c_5(10)$}
For $c_5(10)$ we have
\begin{multline*}
c_5(10)=\int_0^1\int_0^{y_1}\int_0^{y_2}(y_1-y_3)^{-d}(y_2-y_3)^{-d|}\int_0^{y_3}
(y_1-y_4)^{-d} \\
\int_0^{y_4}(y_2-y_5)^{-d}(y_4-y_5)^{-d}dy_5\, dy_4\, dy_3\,  dy_2\, dy_1 	
\end{multline*}
For the integral over $y_5$ we get
\begin{multline*}
c_5(10)=\dfrac{1}{(1-d)}\int_0^1\int_0^{y_1}\int_0^{y_2}(y_1-y_3)^{-d}(y_2-y_3)^{-d|} \\
\int_0^{y_3}(y_4)^{1-d}(y_1-y_4)^{-d}\hyp21{1,d}{2-d}{\dfrac{y_4}{y_2}}dy_4\, dy_3\,  dy_2\, dy_1 	
\end{multline*}
Writing the hypergeometric function as a summation over $k$ and integration over $y_4$ gives
\begin{multline*}
c_5(10)=\dfrac{1}{(1-d)}\sum_{k=0}^\infty\dfrac{(d)_k}{(2-d)_k}\int_0^1\int_0^{y_1}\int_0^{y_2}(y_1)^{-d}(y_2)^{-d-k}(y_3)^{2-d+k}(y_1-y_3)^{-d}(y_2-y_3)^{-d|} \\
\hyp21{d,2-d+k}{3-d+k}{\dfrac{y_3}{y_1}}dy_3\,  dy_2\, dy_1 	
\end{multline*}
Writing the hypergeometric function as a summation over $j$ and integration over $y_3$ gives
\begin{multline*}
c_5(10)=\dfrac{\Gamma(1-d)}{\Gamma(3-d)}\sum_{k=0}^\infty\sum_{j=0}^\infty
\dfrac{(d)_k}{(2-d)_k}\dfrac{(d)_j(2-d+k)_j}{(3-d)_k}\dfrac{(d)_j(2-d+k)_j}{(3-d+k)_j}
\dfrac{1}{j!} \\
\int_0^1\int_0^{y_1}\int_0^{y_2}(y_1)^{-d-j}(y_2)^{-d-k}(y_3)^{2-d+j+k}(y_1-y_3)^{-d}(y_2-y_3)^{-d}dy_3\,  dy_2\, dy_1 	
\end{multline*}
For the triple integral we get
\begin{multline*}
\int_0^1\int_0^{y_1}\int_0^{y_2}(y_1)^{-d-j}(y_2)^{-d-k}(y_3)^{2-d+j+k}(y_1-y_3)^{-d}(y_2-y_3)^{-d}dy_3\,  dy_2\, dy_1= \\
= \dfrac{\Gamma(1-d)}{(5-5d)}\dfrac{\Gamma(3-d+j+k)}{(4-3d+j)\Gamma(4-2d+j+k)}
\hyp32{d,4-3d+j,3-d+j+k}{5-3d+j,4-2d+j+k}{1}
\end{multline*}
Application of the Thomae transformation \eqref{55.5} gives for $c_5(10)$ after a lot of manipulations
\[
c_5(10)=\dfrac{\Gamma(1-d)\Gamma(2-2d)}{(1-d)(5-5d)}\sum_{k=0}^\infty\sum_{j=0}^\infty
\dfrac{(d)_k(d)_j(2-d+k)_j}{\Gamma(5-3d+j+k}\dfrac{1}{j!}
\hyp32{1,2-2d,d+k}{2-d,5-3d+j+k}{1}
\]

To work out the summations we do the following steps
\begin{description}
	\item[-] Write the hypergeometric function as a summation over $i$.
	\item[-] Work out the summation over $j$ and then the summation over $k$.. 
	\item[-] There remains a summation over $i$ which can be written as a hypergeometric function.
\end{description}
The result is
\[
c_5(10)=\dfrac{\Gamma(1-d)}{2(1-d)^2}
\dfrac{\Gamma(3-3d)\Gamma(4-5d)}{\Gamma(4-4d)\Gamma(6-5d)}
\hyp43{1,d,2-2d,3-3d}{2-d,3-2d,4-4d}{1}
\]
This result is the same as the result for $c_5(5)$. However, the starting integrals are different.

\subsection{Derivation of $c_5(11)$}
For $c_5(11)$ we have
\begin{multline*}
c_5(11)=\int_0^1\int_0^{y_1}\int_0^{y_2}(y_2-y_3)^{-d}\int_0^{y_3}
(y_1-y_4)^{-d}(y_2-y_4)^{-d} \\
\int_0^{y_4}(y_1-y_5)^{-d}(y_3-y_5)^{-d}dy_5\, dy_4\, dy_3\,  dy_2\, dy_1 
\end{multline*}
Using \eqref{1.1} and the transformation $y_4=(y_3-y_5)z+y_5$ gives
\begin{multline*}
c_5(11)=\int_0^1\int_0^{y_1}\int_0^{y_2}(y_2-y_3)^{-d}\int_0^{y_3}(y_1-y_5)^{-2d}
(y_3-y_5)^{1-d}(y_2-y_5)^{-d} \\
\int_0^1\left(1-\dfrac{y_3-y_5}{y_1-y_5}z\right)^{-d}
\left(1-\dfrac{y_3-y_5}{y_2-y_5}z\right)^{-d}dz\, dy_5\, dy_3\, dy_2\, dy_1
\end{multline*}
Using the integral \eqref{53.7.1} we get $c_5(11)=I_1-I_2$ with
\begin{multline*}
I_1=\dfrac{1}{(1-d)}\int_0^1\int_0^{y_1}\int_0^{y_2}\int_0^{y_3}
(y_2-y_3)^{-d}(y_1-y_5)^{-2d}(y_3-y_5)^{-d}(y_2-y_5)^{1-d} \\
\hyp21{1,d}{2-d}{\dfrac{y_{2}-y_{5}}{y_{1}-y_{5}}}dy_5\, dy_3\, dy_2\, dy_1
\end{multline*}
\begin{multline*}
I_{2}=\dfrac{1}{(1-d)}\int_0^1\int_0^{y_1}\int_0^{y_2}(y_2-y_3)^{1-2d}(y_1-y_3)^{-d}
\hyp21{1,d}{2-d}{\dfrac{y_{2}-y_{3}}{y_{1}-y_{3}}} \\
\int_0^{y_3}(y_1-y_5)^{-d}(y_3-y_5)^{-d}dy_5\, dy_3\, dy_2\, dy_1
\end{multline*}

\subsubsection{Derivation of $I_1$.}
Using the substitutions consecutively over $y_5=y_3\, z_3$, $y_3=y_2\, z_3$, and $y_2=y_1\, z_2$, integrating over $y_1$ and writing the hypergeometric functions as summations gives
\begin{multline*}
I_1=\dfrac{1}{(1-d)(5-5d)}\sum_{k=0}^\infty\dfrac{(d)_{k}}{(2-d)_{k}}
\int_0^1\int_0^1(1-z_3)^{-d}(z_3)^{1-d}(1-z_5)^{-d}(1-z_3\, z_5)^{1-d+k} \\
\int_0^1(z_2)^{3-3d+k}(1-z_2\, z_3\, z_5)^{-2d-k}dz_2\, dz_3\, dz_5
\end{multline*}

To work out the integrals and the summations we do the following steps
\begin{description}
	\item[-] Integration over $z_2$ gives a hypergeometric function.
	\item[-] Write the hypergeometric function as a summation over $j$.
	\item[-] Integration over $z_3$ gives a hypergeometric function.
	\item[-] Integrate over $z_5$.
	\item[-] Work out the summation over $j$.
	\item[-] There remains a summation over $k$ which can be written as a hypergeometric function.
\end{description}
The result is
\[
I_1=\dfrac{\Gamma(1-d)^{2}}{3(1-d)^2}\dfrac{\Gamma(4-5d)}{\Gamma(6-5d)\Gamma(2-2d)}
\hyp32{1,d,3-3d}{2-d,4-3d}{1}
\]

\subsubsection{Derivation of $I_2$.}
The integral over $y_5$ is known and we get
\begin{multline*}
I_2=\dfrac{1}{(1-d)^2}\int_0^1\int_0^{y_1}\int_0^{y_2}
(y_1)^{-d}(y_3)^{1-d}(y_2-y_)^{1-2d}(y_1-y_3)^{-d} \\
\hyp21{1,d}{2-d}{\dfrac{y_2-y_3}{y_1-y_3}}\hyp21{1,d}{2-d}{\dfrac{y_3}{y_1}}
dy_3\, dy_2\, dy_1
\end{multline*}
To work out the integrals we do the following steps
\begin{description}
	\item[-] Write the hypergeometric functions as a summation over $k$ and $j$
	\item[-] Do the integrations
	\item[-] Work out the summation over $j$
	\item[-] There remains a summation over $k$ which can be written as a hypergeometric function
\end{description}
\[
I_2=\dfrac{\Gamma(1-d)\Gamma(4-5d)\Gamma(3-3d)}{2(1-d)^2\Gamma(6-5d)\Gamma(4-4d)}
\hyp32{1,d,2-2d}{2-d,3-2d}{1}
\]
Combining $I_1$ and $I_2$ gives for $c_5(11)$
\begin{multline*}
c_5(11)=\dfrac{\Gamma(1-d)^{2}}{3(1-d)^2}\dfrac{\Gamma(4-5d)}{\Gamma(6-5d)\Gamma(2-2d)}
\hyp32{1,d,3-3d}{2-d,4-3d}{1}- \\
-\dfrac{\Gamma(1-d)\Gamma(4-5d)\Gamma(3-3d)}{2(1-d)^2\Gamma(6-5d)\Gamma(4-4d)}
\hyp32{1,d,2-2d}{2-d,3-2d}{1}
\end{multline*}

\subsection{Derivation of $c_5(12)$}
For $c_5(12)$ we have
\begin{multline*}
c_5(12)=\int_0^1\int_0^{y_1}\int_0^{y_2}(y_1-y_3)^{-d}(y_2-y_3)^{-d}\int_0^{y_3}
(y_2-y_4)^{-d}\\
\int_0^{y_4}(y_1-y_5)^{-d}(y_4-y_5)^{-d}dy_5\, dy_4\, dy_3\,  dy_2\, dy_1 	
\end{multline*}
Using \eqref{1.1} and the transformation $y_4=(y_3-y_5)z+y_5$ gives
\begin{multline*}
c_5(12)=\int_0^1\int_0^{y_1}\int_0^{y_2}(y_1-y_3)^{-d}(y_2-y_3)^{-d}
\int_0^{y_3}(y_1-y_5)^{-d}(y_3-y_5)^{1-d}(y_2-y_5)^{-d} \\
\int_0^1 z^{-d}\left( 1-\dfrac{y_3-y_5}{y_2-y_5}z\right)^{-d}dz\, dy_5\, dy_3\, dy_2\, dy_1
\end{multline*}
The integral over $z$ gives a hypergeometric function.
\[
\int_0^1 z^{-d}\left(1-\dfrac{y_3-y_5}{y_2-y_5}z\right)^{-d}dz=
\dfrac{1}{(1-d)}(y_3-y_5)^{1-d}(y_2-y_3)^{-d}
\hyp21{1,d}{2-d}{-\dfrac{y_{3}-y_{5}}{y_{2}-y_{3}}}
\]
Using the transformations from Remark 5, writing the factor $(1-z_2\, z_3\, z_5)^{-d}$ as a summation and integration over $y_1$ gives
\begin{multline*}
c_5(12)=\dfrac{1}{(1-d)(5-5d)}\sum_{k=0}^\infty\dfrac{(d)_k}{k!}
\int_0^1\int_0^1(z_2)^{3-3d+k}(1-z_2\, z_3)^{-d}(z_3)^{2-d+k}(1-z_3)^{-2d} \\
\int_0^1(z_5)^{1-d}(1-z_5)^{k}
\hyp21{1,d}{2-d}{-\dfrac{z_{3}}{1-z_{3}}z_{5}}dz_5\, dz_3\, dz_2
\end{multline*}
The integrals over $z_2$ and $z_5$ give
\begin{align*}
&\int_0^1(z_2)^{3-3d+k}(1-z_2\, z_3)^{-d}dz_2=
\dfrac{1}{(4-3d+k)}
\hyp21{d,4-3d+k}{5-3d+k}{z_3} \\
&\int_0^1(z_5)^{1-d}(1-z_5)^{k}\hyp21{1,d}{2-d}{-\dfrac{z_{3}}{1-z_{3}}z_{5}}dz_5=
\dfrac{\Gamma(2-d)\Gamma(k+1)}{\Gamma(3-d+k)}
(1-z_3)^d \hyp21{2-d+k,d}{3-d+k}{z_3}
\end{align*}
After substitution there remains
\begin{multline*}
c_5(12) =\dfrac{1}{(5-5d)}\dfrac{\Gamma(1-d)}{\Gamma(3-d)}
\sum_{k=0}^\infty\dfrac{(d)_k}{(3-d)_k}\dfrac{1}{(4-3d+k)} \\
\int_0^1(z_3)^{2-d+k}(1-z_3)^{-d}
\hyp21{2-d+k,d}{3-d+k}{z_3}\hyp21{d,4-3d+k}{5-3d+k}{z_3}dz_3
\end{multline*}
The integral is known \cite[2.21.9.(12)]{2}. Application gives
\begin{align*}
c_5(12)&=\dfrac{2}{(5-5d)}\dfrac{\Gamma(2-2d)\Gamma(1-d)^3}{\Gamma(5-4d)}
\dfrac{\Gamma(2d-2)}{\Gamma(d-1)}
\sum_{k=0}^\infty\dfrac{(d)_k}{(5-4d)_k}+ \\
&+\dfrac{\Gamma(1-d)}{(5-5d)}\dfrac{\Gamma(3-3d)\Gamma(2-2d)\Gamma(d-1)}
{\Gamma(3-2d)\Gamma(d)\Gamma(5-4d)}\hyp32{1,3-3d,2-2d}{3-2d,2-d}{1}
\sum_{k=0}^\infty\dfrac{(d)_k}{(5-4d)_k}
\end{align*}
Applying the Thomae transformation \eqref{55.1} and working out the summations we get at last 
\begin{multline*}
c_5(12)=\dfrac{\Gamma(1-d)^3\Gamma(4-5d)}{\Gamma(4-4d)\Gamma(6-5d)}
\dfrac{\Gamma(2-2d)\Gamma(2d-1)}{\Gamma(d)}- \\
-\dfrac{\Gamma(1-d)}{(1-d)}\dfrac{\Gamma(2d-1)}{\Gamma(2d)}
\dfrac{\Gamma(3-3d)\Gamma(4-5d)}{\Gamma(4-4d)\Gamma(6-5d)}
\hyp32{1,d,2d-1}{2-d,2d}{1}
\end{multline*}

\subsection{The fifth cumulant}
For the fifth cumulant we use \eqref{53.1} and \eqref{53.3}. We get
\[
\kappa_5(d)=3840\left(\dfrac{1}{2}(1-2d)(1-d)\right)^{5/2}\sum_{i=1}^{12}c_5(i)
\]
The summation of the $c_5(1..12)$ yields 18 terms. Of these, there are three terms without a hypergeometric function, three terms with a $_4F_3$ hypergeometric function, and twelve terms with a $_2F_1 $ hypergeometric function. The terms with the $_4F_3$ functions cancel each other out, as do two of the terms without the hypergeometric function. A total of six terms remains. The result is
\[
\kappa_5(d)=384\left(\dfrac{1}{2}(1-2d)(1-d)\right)^{5/2}\, 10\, S
\]
with 
\begin{align}
S&\left[\dfrac{\Gamma(4-5d)}{\Gamma(6-5d)}\right]^{-1}=\dfrac{\Gamma(1-d)^4}{\Gamma(4-4d)}+\dfrac{\Gamma(1-d)^3}{(1-d)\Gamma(2-2d)}\dfrac{\Gamma(3-3d)}{\Gamma(4-4d)}
\hyp32{d,1-d,3-3d}{2-d,4-4d}{1}+ \nonumber \\
&+\dfrac{\Gamma(1-d)^2\Gamma(2-2d)}{(1-d)\Gamma(4-4d)}\hyp32{d,1-d,2-2d}{2-d,4-4d}{1}
+\dfrac{\Gamma(1-d)^2}{6(1-d)^2\Gamma(2-2d)}\hyp32{1,d,3-3d}{3-2d,4-3d}{1}+ \nonumber \\
&+\dfrac{2\Gamma(1-d)^2}{3(1-d)^2\Gamma(2-2d)}\hyp32{1,d,3-3d}{2-d,4-3d}{1}
+\dfrac{\Gamma(1-d)^2}{2(1-d)^2\Gamma(2-2d)}\hyp32{1,d,2-2d}{2-d,3-2d}{1}
\label{54.1}
\end{align}
\

with $0 \leq d \leq 0.5$. For $d=0$ we get for the total sum of the integrals: $10\, S=1$. The formula gives the following table of $\kappa_5(d)$ versus $d$. This table gives the same values as the values from Table 4 in \cite{5}. 

\

\begin{table}[!ht]
\centering
\begin{tabular}[t]{|r|r|r|r|r|r|r|r|r|r|r|r|}
	\hline
	$d$&$0$ &0.05&0.10&0.15&0.20&0.25&0.30&0.35&0.40&0.45&0.50\\
	\hline
	$\kappa_5$ &67.88&67.33&65.46&61.92&56.37&48.51&38.32&26.24&13.68&3.563&0\\
	\hline
\end{tabular}
\caption{The values of $\kappa$ as function of $d$. $\kappa_5$ is computed with formula $\eqref{54.1}$.}
\label{Ta2}
\end{table}
In the next figure we show a plot of this table.

\begin{figure}[ht]
\centering
\includegraphics[width=200pt]{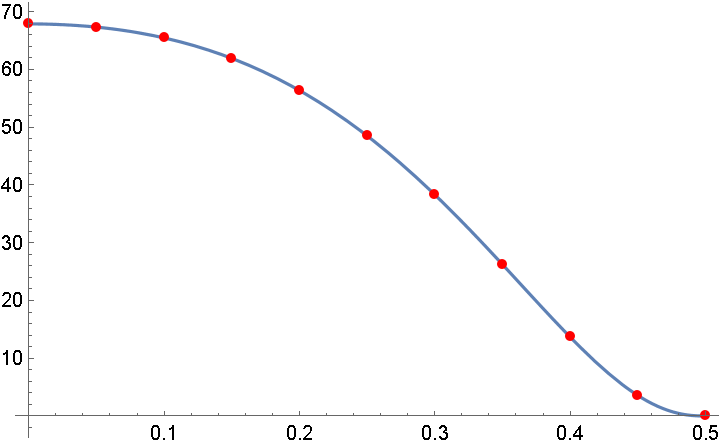}
\caption{   The fifth cumulant $\kappa_5(d)$ versus $d$. The continues line is from $\eqref{54.1}$. The dots are from Table 3.}
\end{figure}
\vspace{0.1 cm}

\section{The method of Veillette and Taqqu for the fourth cumulant}
Veilette and Taqqu designed a method to calculate the cumulants with a recurrence equation. They define the operator
\[
\big(\kappa _{d}f\big)(x)=\int_0^1 |x-u|^{-d}f(u)du
\]
and give the recurrence equation
\begin{equation}
	G_{1,d}(x)=\dfrac{(1-x)^{-d}}{(1-d)^{1/2}} \qquad\qquad G_{k,d}(x)=\big(\kappa _d G_{k-1,d}\big)(x)=\int_0^1 |x-u|^{-d}G_{k-1,d}(u)du
	\label{33.1}
\end{equation}
Then they prove for the factors $c_k$
\[
c_{k}=\int_0^1 G_{\mu,d}(x) G_{\nu ,d}(x) dx
\]
with  $\mu +\nu =k$. 
The cumulants follows then by \eqref{1.3}. For the $k=2,3,4,5$ we get
\begin{align}
	c_2&=\int_0^1 G_{1,d}(x) G_{1 ,d}(x) dx \nonumber \\
	c_3&=\int_0^1 G_{1,d}(x) G_{2 ,d}(x) dx \nonumber \\
	c_4&=\int_0^1 G_{1,d}(x) G_{3 ,d}(x) dx=\int_0^1 G_{2,d}(x) G_{2 ,d}(x) dx
	\label{33.2} \\
	c_5&=\int_0^1 G_{1,d}(x) G_{4 ,d}(x) dx=\int_0^1 G_{2,d}(x) G_{3 ,d}(x) dx
	\label{33.25}
\end{align}
To compute $c_k$ we have to compute $G_{k,d}(x)$. For $k=2$ Veillette and Taqqu found
\[
G_{2,d}(x)=\dfrac{x^{1-d}}{(1-d)^{3/2}}\hyp21{1,d}{2-d}{x}
+\dfrac{\Gamma(1-d)^2}{(1-d)^{1/2}\Gamma(2-2d)}(1-x)^{1-2d}
\]
\vspace{0.1 cm}

To compute $c_4$ with \eqref{33.2} there are two possibilities. We prefer the first possibility because with the second possibility there arises integrals of a product of two hypergeometric functions and these are very difficult. Then we have to compute $G_{3,d}$.

For $k=3$ we get
\begin{align}
	G_{3,d}(x)&=\int_0^1|x-u|^{-d}\, G_{2,d}(u)du \nonumber \\
	&=\dfrac{1}{(1-d)^{3/2}}\int_0^1|x-u|^{-d}u^{1-d}\hyp21{1,d}{2-d}{u}du+ \nonumber \\
	&\qquad\qquad\qquad\qquad\qquad\qquad
	+\dfrac{\Gamma(1-d)^2}{(1-d)^{1/2}\Gamma(2-2d)}\int_0^1|x-u|^{-d}(1-u)^{1-2d}du
	\label{33.2a}
\end{align}
To compute the integrals we use the following Lemmas.
\begin{lemma}
	Let $a,$\, $b,$\, $c,$\, $d,$\, $e,$ and $x$ be real. Let $c>a+b$, $0<x<1$ and $0 \leq d \leq 0.5$. Then 
	\begin{align}
		\int_0^1|x-u]^{-d}&\, u^e \hyp21{a,b}{c}{u}du= \nonumber \\
		&=\Gamma(1-d)\left(\dfrac{\Gamma(1+e)}{\Gamma(2-d+e)}+\dfrac{\Gamma(d-1-e)}{\Gamma(-e)} \right)\, x^{1-d+e}\hyp32{a,b,e+1}{c,2-d+e}{x}+ \nonumber \\
		&+\sum_{m=0}^\infty\dfrac{(a)_m(b)_m}{(c)_m}\dfrac{1}{m!}\dfrac{1}{(1-d+e+m)}
		\hyp21{d-1-e-m,d}{d-e-m}{x}
		\label{L1}
	\end{align}
\end{lemma}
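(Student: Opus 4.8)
The plan is to expand the hypergeometric factor as its defining series
\[
\hyp21{a,b}{c}{u}=\sum_{m=0}^\infty\dfrac{(a)_m(b)_m}{(c)_m\,m!}\,u^m ,
\]
interchange summation and integration (allowed as in Remark~1, the hypothesis $c>a+b$ guaranteeing that $\hyp21{a,b}{c}{u}$ stays bounded up to $u=1$), and thereby reduce everything to the elementary power integral
\[
J_m(x)=\int_0^1|x-u|^{-d}\,u^{e+m}\,du,\qquad m\ge0 .
\]
Since $0<x<1$ I would split $J_m$ at $u=x$, writing $|x-u|^{-d}=(x-u)^{-d}$ on $(0,x)$ and $(u-x)^{-d}$ on $(x,1)$, and handle the two pieces separately.

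For the inner piece I would substitute $u=xt$, turning $\int_0^x(x-u)^{-d}u^{e+m}\,du$ into $x^{1-d+e+m}\int_0^1(1-t)^{-d}t^{e+m}\,dt$, a Beta integral equal to $\Gamma(1-d)\,\dfrac{\Gamma(1+e+m)}{\Gamma(2-d+e+m)}\,x^{1-d+e+m}$. Writing $\Gamma(1+e+m)/\Gamma(2-d+e+m)=\dfrac{\Gamma(1+e)}{\Gamma(2-d+e)}\dfrac{(1+e)_m}{(2-d+e)_m}$ and resumming over $m$ produces precisely $\Gamma(1-d)\dfrac{\Gamma(1+e)}{\Gamma(2-d+e)}x^{1-d+e}\hyp32{a,b,e+1}{c,2-d+e}{x}$, that is, the first half of the $\hyp32{}{}{}$-coefficient in the statement.

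For the outer piece the substitution $u=1-(1-x)s$ gives
\[
\int_x^1(u-x)^{-d}u^{e+m}\,du=(1-x)^{1-d}\int_0^1(1-s)^{-d}\bigl(1-(1-x)s\bigr)^{e+m}\,ds=\dfrac{(1-x)^{1-d}}{1-d}\hyp21{-e-m,1}{2-d}{1-x},
\]
an Euler integral with argument $1-x$. The decisive step is then to pass from argument $1-x$ to argument $x$ by the standard connection formula for $\hyp21{}{}{}$ at its two singular points. One of the two resulting series has coinciding upper and lower parameter and collapses by the binomial theorem to $(1-x)^{d-1}$, supplying the term $\Gamma(1-d)\dfrac{\Gamma(d-1-e-m)}{\Gamma(-e-m)}\,x^{1-d+e+m}$; the other, after the Euler transformation $\hyp21{-e-m,1}{d-e-m}{x}=(1-x)^{d-1}\hyp21{d-1-e-m,\,d}{d-e-m}{x}$, carries exactly the factor $(1-x)^{d-1}$ needed to cancel the prefactor. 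Once all powers of $(1-x)$ cancel one is left with
\[
\int_x^1(u-x)^{-d}u^{e+m}\,du=\dfrac{1}{1-d+e+m}\hyp21{d-1-e-m,d}{d-e-m}{x}+\Gamma(1-d)\dfrac{\Gamma(d-1-e-m)}{\Gamma(-e-m)}\,x^{1-d+e+m}.
\]

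Finally I would resum over $m$. The first term reproduces verbatim the $\hyp21{}{}{}$-series in the statement. For the second term I would use the Pochhammer-ratio identity $\dfrac{\Gamma(d-1-e-m)}{\Gamma(-e-m)}=\dfrac{\Gamma(d-1-e)}{\Gamma(-e)}\dfrac{(1+e)_m}{(2-d+e)_m}$, obtained from $\Gamma(z-m)=\Gamma(z)/\bigl((-1)^m(1-z)_m\bigr)$, which turns the $m$-sum into $\Gamma(1-d)\dfrac{\Gamma(d-1-e)}{\Gamma(-e)}x^{1-d+e}\hyp32{a,b,e+1}{c,2-d+e}{x}$; adding this to the inner-piece contribution assembles the full bracket $\dfrac{\Gamma(1+e)}{\Gamma(2-d+e)}+\dfrac{\Gamma(d-1-e)}{\Gamma(-e)}$. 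I expect the main obstacle to be the outer-piece bookkeeping: correctly applying the $1-x\mapsto x$ connection formula, recognising the degenerate $\hyp21{}{}{}$ as a binomial, and verifying that every surviving power of $(1-x)$ cancels so that only powers of $x$ remain; the interchange of the resulting double summation and its convergence is routine under the conventions of Remark~1.
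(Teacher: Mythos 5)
Your proposal is correct and follows essentially the same route as the paper: split the integral at $u=x$, expand the $\hyp21{a,b}{c}{u}$ factor as a series, evaluate the two pieces term by term, and resum the pure-power contributions via $\Gamma(d-1-e-m)/\Gamma(-e-m)=\frac{\Gamma(d-1-e)}{\Gamma(-e)}\frac{(1+e)_m}{(2-d+e)_m}$ into the second half of the $\hyp32{a,b,e+1}{c,2-d+e}{x}$ coefficient, exactly as in the paper's treatment of its summation $S$. The only difference is that where the paper quotes the two sub-integrals as ``known,'' you derive them explicitly (Beta integral for the inner piece; Euler integral, the $1-x\mapsto x$ connection formula, and an Euler transformation for the outer piece), and your bookkeeping there checks out.
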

Remark:
\[ \Gamma(1-d)\left(\dfrac{\Gamma(1+e)}{\Gamma(2-d+e)}+\dfrac{\Gamma(d-1-e)}{\Gamma(-e)}\right)=B(1-d,d-1-e)+B(1-d,1+e)
\]
where $B(a,b)$ is the Beta function.
\begin{proof}
	Because of the absolute values we had to split up the integral
	\begin{multline*}
		I(x)=\int_0^1|x-u|^{-d}\, u^e\hyp21{a,b}{c}u{}du= \\
		=\int_0^x (x-u)^{-d}\, u^e\hyp21{a,b}{c}{u}du+
		\int_x^1 (u-x)^{-d}\, u^e\hyp21{a,b}{c}{u}du
	\end{multline*}
	The first integral is known. For the second integral we write the hypergeometric function as a summation and interchange the integral and the summation. The result is
	\begin{align*}
		I(x)=&\dfrac{\Gamma(1+e)\Gamma(1-d)}{\Gamma(2-d+e)}x^{1-d+e}
		\hyp32{a,b,e+1}{c,2-d+e}{x}+ \\
		&+\sum_{k=0}^\infty\dfrac{(a)_k(b)_k}{(c)_k}\dfrac{1}{k!}
		\dfrac{1}{(1-d+e+k)}\hyp21{d-1-e-k,d}{d-e-k}{x}+ \\
		&+\sum_{k=0}^\infty\dfrac{(a)_k(b)_k}{(c)_k}\dfrac{1}{k!}
		x^{1-d+e+k}\dfrac{\Gamma(1-d)\Gamma(d-1-e-k)}{\Gamma(-e-k)}
	\end{align*}
	For the second summation we get
	\begin{align*}
		S&=\sum_{k=0}^\infty\dfrac{(a)_k(b)_k}{(c)_k}\dfrac{1}{k!}
		x^{1-d+e+k}\dfrac{\Gamma(1-d)\Gamma(d-1-e-k)}{\Gamma(-e-k)}=  \\
		&=\Gamma(1-d)\dfrac{\Gamma(d-1-e)}{\Gamma(-e)}
		\sum_{k=0}^\infty\dfrac{(a)_k(b)_k}{(c)_k}\dfrac{1}{k!}x^{1-d+e+k}
		\dfrac{(1+e)_k}{(2-d+e)_k}= \\
		&=\Gamma(1-d)\dfrac{\Gamma(d-1-e)}{\Gamma(-e)}\hyp32{a,b,e+1}{c,2-d+e}{x}
	\end{align*}
	Substitution in $I(x)$ proofs the Lemma
\end{proof}
\begin{lemma}
	Let $d$ and $x$ be real. Let $p$ integer. Let $0<x<1$ and $0 \leq d \leq 0.5$. Then 
	\begin{align*}
		\int_0^1|x-u|^{-d}(1-u)^{p-(p+1)d}du
		&=\dfrac{\Gamma(1-d)\Gamma((p+1)(1-d))}{\Gamma((p+2)(1-d))}(1-x)^{(p+1)-(p+2)d}+ \\
		&+\dfrac{1}{(1-d)}x^{1-d}\hyp21{1,(p+1)d-1}{2-d}{x}
	\end{align*}
\end{lemma}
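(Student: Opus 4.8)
The plan is to remove the absolute value by splitting the range at $u=x$, writing
\[
\int_0^1|x-u|^{-d}(1-u)^{p-(p+1)d}\,du=I_1+I_2,\qquad
I_1=\int_0^x(x-u)^{-d}(1-u)^{p-(p+1)d}\,du,
\]
and $I_2=\int_x^1(u-x)^{-d}(1-u)^{p-(p+1)d}\,du$, and then to show that $I_2$ supplies the closed $(1-x)$-power term while $I_1$ supplies the hypergeometric term. Both pieces converge for $0\le d\le 0.5$, since the exponent $-d$ exceeds $-1$ at each endpoint singularity.

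First I would treat $I_2$ by the substitution $1-u=(1-x)t$, which sends $u\in[x,1]$ to $t\in[0,1]$ and turns $u-x$ into $(1-x)(1-t)$. This factors out a power of $1-x$ and leaves a Beta integral in $t$ with exponent $p-(p+1)d$ on $t$ and $-d$ on $1-t$. Collecting the powers of $1-x$ gives $(p+1)-(p+2)d$, and since $p-(p+1)d+1=(p+1)(1-d)$ the Beta integral equals $B\big((p+1)(1-d),1-d\big)$; hence
\[
I_2=\dfrac{\Gamma(1-d)\Gamma((p+1)(1-d))}{\Gamma((p+2)(1-d))}(1-x)^{(p+1)-(p+2)d},
\]
which is exactly the first term of the claim.

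Next I would treat $I_1$ by the substitution $u=xs$ with $s\in[0,1]$, which extracts $x^{1-d}$ and reduces $I_1$ to $x^{1-d}\int_0^1(1-s)^{-d}(1-xs)^{p-(p+1)d}\,ds$. This last integral is of Euler type: matching it to the Euler representation of the Gauss function in the case $b=1$, $c=2-d$ (so the prefactor $\Gamma(1)\Gamma(1-d)/\Gamma(2-d)$ collapses to $1/(1-d)$, and the condition $c>b>0$ holds for $d<1$) identifies it with $\tfrac{1}{1-d}\hyp21{1,(p+1)d-1}{2-d}{x}$. This produces the second term $\dfrac{1}{(1-d)}x^{1-d}\hyp21{1,(p+1)d-1}{2-d}{x}$, and adding $I_1$ and $I_2$ yields the stated formula.

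The main obstacle is the exponent bookkeeping in $I_1$: one must carry the exponent $p-(p+1)d$ of the factor $1-xs$ through the Euler representation and verify that the upper parameter of the Gauss function lands on exactly $(p+1)d-1$, with denominator parameter $2-d$, in harmony with the power $(p+1)-(p+2)d$ and the Gamma arguments $(p+1)(1-d)$, $(p+2)(1-d)$ delivered by $I_2$. Because this matching is the delicate point, I would check the specialization $p=1$ separately: that is the case actually invoked in \eqref{33.2a}, where the integrand is $(1-u)^{1-2d}$, and testing it against the known closed form for $G_{2,d}$ gives a direct consistency check for the general identity.
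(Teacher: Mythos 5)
Your decomposition and your treatment of $I_2$ are exactly right, and they follow the same route as the paper's own (very terse) proof, which splits the integral at $u=x$ and evaluates two standard pieces: the substitution $1-u=(1-x)t$ indeed yields the Beta integral $B\bigl((p+1)(1-d),\,1-d\bigr)$ together with the power $(1-x)^{(p+1)-(p+2)d}$, which is the first term of the claim. The genuine problem sits precisely in the step you flagged as delicate and then did not carry out. In $I_1$, after the substitution $u=xs$, the Euler representation
\[
\hyp21{a,b}{c}{x}=\frac{\Gamma(c)}{\Gamma(b)\Gamma(c-b)}\int_0^1 s^{b-1}(1-s)^{c-b-1}(1-xs)^{-a}\,ds
\]
with $b=1$, $c=2-d$ forces $-a=p-(p+1)d$, i.e.\ the upper parameter is $a=(p+1)d-p$, not $(p+1)d-1$. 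Hence
\[
I_1=\frac{x^{1-d}}{1-d}\,\hyp21{1,(p+1)d-p}{2-d}{x},
\]
and the identification you assert, with parameter $(p+1)d-1$, is false for every integer $p\neq 1$. A concrete check: at $p=2$, $d=x=\tfrac12$ one computes $I_1=\int_0^{1/2}(\tfrac12-u)^{-1/2}(1-u)^{1/2}\,du\approx 1.148$, which equals $\sqrt{2}\,\hyp21{1,-1/2}{3/2}{1/2}$, whereas the printed parameter would give $\sqrt{2}\,\hyp21{1,1/2}{3/2}{1/2}\approx 1.763$.

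The deeper issue is that the lemma as printed is itself in error: $(p+1)d-1$ should read $(p+1)d-p$; the two coincide only at $p=1$. This is exactly why your proposed consistency check cannot help you — you chose the one value of $p$ at which the misprint is invisible; testing $p=0$ (where the integral reduces to the $\hyp21{1,d}{2-d}{x}$ formula used throughout the paper) or $p=2$ would have exposed it. Note that the paper itself silently uses the corrected form: its application of this lemma to $I_4(x)=\int_0^1|x-u|^{-d}(1-u)^{2-3d}\,du$ (the case $p=2$) produces $\hyp21{3d-2,1}{2-d}{x}$, i.e.\ parameter $(p+1)d-p=3d-2$, not $3d-1$. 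So your method is the right one, and carried through honestly it proves the corrected identity and disproves the printed one; but as written, your proof papers over the one computation that matters by asserting an Euler matching that does not hold.
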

\begin{proof}
	Because of the absolute values we had to split up the integral
	\begin{multline*}
		\int_0^1|x-u|^{-d}(1-u)^{p-(p+1)d}du= \\
		=\int_0^x(x-u)^{-d}(1-u)^{p-(p+1)d}du+\int_x^1(u-x)^{-d}(1-u)^{p-(p+1)d}du
	\end{multline*}
	These integrals are standard and gives the desired result.
\end{proof}

Application of Lemma  1 to the first integral of \eqref{33.2a} gives
\begin{align*}
	\dfrac{1}{(1-d)^{3/2}}&\int_0^1|x-u]^{-d}\, u^{1-d} \hyp21{1,d}{2-d}{u}du= \\
	&=\dfrac{\Gamma(1-d)}{(1-d)^{3/2}}\left(\dfrac{\Gamma(2-d)}{\Gamma(3-2d)}+
	\dfrac{\Gamma(2d-2)}{\Gamma(d-1)} \right)\, x^{2-2d}\hyp21{1,d}{3-2d}{x}+ \\
	&+\dfrac{1}{(1-d)^{3/2}}\sum_{m=0}^\infty\dfrac{(d)_m}{(2-d)_m}\dfrac{1}{(2-2d+m)}
	\hyp21{2d-2-m,d}{2d-1-m}{x}
\end{align*}

Application of Lemma 2 for the integral in the second term of \eqref{33.2a} gives
\begin{multline*}
	\dfrac{\Gamma(1-d)^2}{(1-d)^{1/2}\Gamma(2-2d)}\int_0^1|x-u|^{-d}(1-u)^{1-2d}du= \\
	=\dfrac{\Gamma(1-d)^2}{(1-d)^{3/2}\Gamma(2-2d)}\, x^{1-d}\hyp21{1,2d-1}{2-d}{x}+
	\dfrac{\Gamma(1-d)^3}{(1-d)^{1/2}\Gamma(3-3d)}(1-x)^{2-3d}
\end{multline*}

Combining these results gives
\begin{align}
	G_{3,d}(x)&=\dfrac{\Gamma(1-d)}{(1-d)^{3/2}}\left(\dfrac{\Gamma(2-d)}{\Gamma(3-2d)}+
	\dfrac{\Gamma(2d-2)}{\Gamma(d-1)} \right)\, x^{2-2d}\hyp21{1,d}{3-2d}{x}+ \nonumber \\
	&+\dfrac{1}{(1-d)^{3/2}}
	\dfrac{\Gamma(2-2d)}{\Gamma(3-2d)}\sum_{k=0}^\infty\dfrac{(d)_k}{( 2-d)_{k}}
	\dfrac{(2-2d)_k}{(3-2d)_k}\hyp21{2d-2-k,d}{2d-1-k}{x}+ \nonumber \\
	&+\dfrac{\Gamma(1-d)^2}{(1-d)^{3/2}\Gamma(2-2d)}\, x^{1-d}\hyp21{1,2d-1}{2-d}{x}+
	\dfrac{\Gamma(1-d)^3}{(1-d)^{1/2}\Gamma(3-3d)}(1-x)^{2-3d}
	\label{33.2b}
\end{align}
With $G_{3,d}(x)$ we can compute $c_4$ with \eqref{33.2}. Substituting in \eqref{33.2} gives for $c_4$
\begin{align*}
	c_{4}&=\dfrac{\Gamma(1-d)}{(1-d)^{3/2}}\left(\dfrac{\Gamma(2-d)}{\Gamma(3-2d)}+
	\dfrac{\Gamma(2d-2)}{\Gamma(d-1)} \right)\int_0^1 x^{2-2d}(1-x)^{-d}\hyp21{1,d}{3-2d}{x}dx+ \\
	&+\dfrac{1}{2(1-d)^3}\sum_{k=0}^\infty\dfrac{(d)_k}{(2-d)_k}
	\dfrac{(2-2d)_k}{(3-2d)_k}\int_0^1(1-x)^{-d}\hyp21{2d-2-k,d}{2d-1-k}{x}dx+ \\
	&+\dfrac{\Gamma(1-d)^2}{(1-d)^{2}\Gamma (2-2d)}\int_0^1x^{1-d}(1-x)^{-d}
	\hyp21{1,2d-1}{2-d}{x}dx+\dfrac{\Gamma(1-d)^3}{(1-d)\Gamma(3-3d)}\int_0^1(1-x)^{2-4d}dx
\end{align*}
All integrals are standard. Integration gives after some rearrangements
\begin{align}
	c_{4}&=\dfrac{1}{2(1-d)^4}\sum_{k=0}^\infty\dfrac{(d)_k}{(2-d)_k}
	\dfrac{(2-2d)_k}{(3-2d)_k}\hyp32{1,d,2d-2-k}{2-d,2d-1-k}{1}+ \nonumber \\
	&+\dfrac{\Gamma(1-d)^3}{(1-d)(3-4d)\Gamma(3-3d)}
	-\dfrac{\Gamma(1-d)^2\Gamma(2d-2)\Gamma(3-2d)}{(1-d)(3-4d)\Gamma(d)\Gamma(3-3d)}+ \nonumber \\
	&+\dfrac{\Gamma(1-d)^4}{(1-d)(3-4d)\Gamma(2-2d)^2}+
	\dfrac{\Gamma(1-d)^3}{(1-d)(3-4d\Gamma(3-3d)}
	\label{33.3}
\end{align}
What remains is the summation in the first term. Applying the Thomae transformation \eqref{55.1} gives
\begin{align*}
	S&=\sum_{k=0}^\infty\dfrac{(d)_k}{(2-d)_k}\dfrac{(2-2d)_k}{(3-2d)_k}
	\hyp32{1,d,2d-2-k}{2-d,2d-1-k}{1}= \\
	&=\sum_{k=0}^\infty\dfrac{(d)_k}{(2-d)_k}\dfrac{(2-2d)_k}{(3-2d)_k}
	\Gamma(2-2d)\Gamma(2d-1-k)\dfrac{1}{\Gamma(-k)}\hyp32{2-2d,1-d,2d-2-k}{-k,2-d}{1}
\end{align*}
With $M$ a non-negative integer there is the well-known transformation
\begin{multline*}
	\dfrac{1}{\Gamma(-M)} \ _{p+1}F_p
	\left(\begin{array}{l}
		a_0,\dots,a_p \\
		-M,b_2,\dots,b_p
	\end{array};x\right)= \\
	=\dfrac{x^{M+1}(a_0)_{M+1}\dots(a_p)_{M+1}}{\Gamma(M+2)(b_2)_{M+1}\dots(b_p)_{M+1}}
	\ _{p+1}F_p
	\left(\begin{array}{l}
		a_0+M+1,\dots,a_p+M+1 \\
		M+2,b_2+M+1,\dots,b_p+M+1
	\end{array};x\right)
\end{multline*}
Application gives
\[
S=\dfrac{(1-d)\Gamma(2d-1)\Gamma(3-2d)}{(2-d)}\dfrac{\Gamma(2d-1)}{\Gamma(2d-2)}
\sum_{k=0}^\infty\dfrac{(d)_k(3-2d)_k}{(2)_k(3-d)_k}
\hyp32{3-2d+k,2-d+k,2d-1}{2+k,3-d+k}{1}
\]
Applying the Thomae transformation \eqref{55.5} gives 
\[
S=\dfrac{4(1-d)^3\Gamma(1-2d)}{(2-d)\Gamma(3-2d)}
\sum_{k=0}^\infty\dfrac{(d)_k}{(3-d)_k}\hyp32{1,d,2d-1}{2d,3-d+k}{1}
\]
Writing the hypergeometric function as a summation over $m$, interchanging the summations and summing over $k$ gives
\begin{equation}
	S=\dfrac{(1-d)\Gamma(1-2d)}{\Gamma(2-2d)}
	\sum_{k=0}^\infty\dfrac{(2d-1)_m(2-2d)_m(d)_m}{(2d)_m(3-2d)_m(2-d)_m}
	\label{33.4}
\end{equation}
The summation can be written as a $_4F_3$ hypergeometric function and we get
\[
S=\dfrac{(1-d)\Gamma(1-2d)}{\Gamma(2-2d)}\hyp43{2d-1,2-2d,1,d}{2d,3-2d,2-d}{1}
\]
A known property of a special $_4F_3$ hypergeometric function is
\begin{equation*}
	\hyp43{a,b,c,d}{a+1,b+1,e}{1}=\dfrac{b(a-e)}{e(a-b)}\hyp32{a,c,d}{a+1,e+1}{1}-
	\dfrac{a(b-e)}{e(a-b)}\hyp32{b,c,d}{b+1,e+1}{1}
\end{equation*}
Application gives
\begin{multline*}
	\hyp43{2d-1,2-2d,1,d}{2d,3-2d,2-d}{1}= \\
	=\dfrac{(2-2d)(3-3d)}{(2-d)(3-4d)}\hyp32{2d-1,1,d}{2d,3-d}{1}+ \dfrac{d\,(1-2d)}{(2-d)(3-4d)}\hyp32{2-2d,1,d}{3-2d,3-d}{1}
\end{multline*}
However doing the summation in \eqref{33.4} with Mathematica  gives
\begin{multline}
	\hyp43{2d-1,2-2d,1,d}{2d,3-2d,2-d}{1}= \\
	=\dfrac{(1-d)}{(3-4d)}\hyp32{2-2d,1,d}{3-2d,2-d}{1}+ \dfrac{2\,(1-d)^2}{(1-2d)(3-4d)}\hyp32{2d-1,1,d}{2d,2-d}{1}
	\label{33.5}
\end{multline}
It is hard to prove that the two results are equal. We leave this to the reader.

Using the Thomae transformation \eqref{55.3} for the second hypergeometric function of \eqref{33.5} gives
\[
S=\dfrac{2(1-d)}{(3-4d)}\hyp32{2-2d,1,d}{3-2d,2-d}{1}+
\dfrac{\Gamma(2-d)^2\Gamma(2d)\Gamma(3-2d)}{(3-4d)(1-2d)\Gamma(3-3d)\Gamma(d)}
\]
Substitution in \eqref{33.3} gives after a lot of manipulations
\begin{align*}
	c_{4}&=\dfrac{1}{(1-d)^3(3-4d)}\hyp32{2-2d,1,d}{3-2d,2-d}{1}+ \\ 
	&+\dfrac{\Gamma(1-d)^4}{(3-4d)(1-d)\Gamma(2-2d)^2}+
	\dfrac{2\Gamma(1-d)^3}{(3-4d)(1-d)\Gamma(3-3d)}
\end{align*}
This is the same result as in Section 4.
\vspace{0.1 cm}

\section{The method of Veillette and Taqqu for the fifth cumulant}
To use the method of Veillette and Taqqu for computing the fifth cumulant we use the first possibility of \eqref{33.25}
\[
c_5=\int_0^1 G_{1,d}(x) G_{4,d}(x) dx=\dfrac{1}{(1-d)^{1/2}}
\int_0^1(1-x)^{-d}\int_0^1|x-u|^{-d}G_{3,d}(u)dudx
\]
For $G_{3,d}(x)$ we use \eqref{33.2b}. The last integral is $G_{4,d}(x)$. We get
\begin{align*}
	G_{4,d}(x)&=\dfrac{\Gamma(1-d)}{(1-d)^{3/2}}\left(\dfrac{\Gamma(2-d)}{\Gamma(3-2d)}+
	\dfrac{\Gamma(2d-2)}{\Gamma(d-1)}\right)I_1(x) \\
	&+\dfrac{1}{(1-d)^{3/2}}
	\dfrac{\Gamma(2-2d)}{\Gamma(3-2d)}\sum_{m=0}^\infty\dfrac{(d)_m}{( 2-d)_m}
	\dfrac{(2-2d)_m}{(3-2d)_m}I_2(x)+ \\
	&+\dfrac{\Gamma(1-d)^2}{(1-d)^{3/2}\Gamma(2-2d)}I_3(x)+
	\dfrac{\Gamma(1-d)^3}{(1-d)^{1/2}\Gamma(3-3d)}I_4(x)
\end{align*}
with
\begin{align*}
	&I_1(x)=\int_0^1|x-u|^{-d}u^{2-2d}\hyp21{1,d}{3-2d}{u}du \qquad
	\ \,  I_2(x)=\int_0^1|x-u|^{-d}\hyp21{2d-2-k,d}{2d-1-k}{u}du  \\
	&I_3(x)=\int_0^1|x-u|^{-d}u^{1-d}\hyp21{1,2d-1}{2-d}{u}du \qquad
	I_4(x)=\int_0^1|x-u|^{-d}(1-u)^{2-3d}du
\end{align*}
For the integrals with the hypergeometric functions we use Lemma 1.
\begin{multline*}
	I_1(x)=\Gamma(1-d)\left(\dfrac{\Gamma(3-2d)}{\Gamma(4-3d)}+\dfrac{\Gamma(3d-3)}{\Gamma(2d-2)} \right)x^{3-3d}\hyp21{1,d}{4-3d}{x}+ \\
	+\sum_{k=0}^\infty\dfrac{(d)_k}{(2-d)_k}\dfrac{1}{(3-3d+k)}\hyp21{3d-3-k,d}{3d-2-k}{x}
\end{multline*}
\begin{multline*}
	I_2(x)=\dfrac{1}{(1-d)}x^{1-d}\hyp32{2d-2-m,d,1}{2d-1-m,2-d}{x} + \\
	+\sum_{k=0}^\infty\dfrac{(2d-2-m)_k (d)_k}{(2d-1-m)_k}
	\dfrac{1}{k!}\dfrac{1}{(3-3d+k)}\hyp21{3d-3-k,d}{3d-2-k}{x}
\end{multline*}
\begin{multline*}
	I_3(x)=\Gamma(1-d)\left(\dfrac{\Gamma(2-d)}{\Gamma(3-2d)}+\dfrac{\Gamma(2d-2)}{\Gamma(d-1)}\right)x^{2-2d}\hyp21{1,2d-1}{3-2d}{x}+ \\
	\sum_{k=0}^\infty\dfrac{(2d-1)_k}{(2-d)_k}\dfrac{1}{(2-2d+k)}\hyp21{2d-2-k,d}{2d-2-k}{x }
\end{multline*}
Application of Lemma 2 for $I_4(x)$ gives
\[
I_4(x)=\dfrac{\Gamma(1-d)\Gamma(3-3d)}{\Gamma(4-4d)}(1-x)^{3-4d}+
\dfrac{1}{(1-d)}x^{1-d}\hyp21{3d-2,1}{2-d}{x} \qquad\qquad\qquad\qquad
\]
For the computation of $c_5$ we had to do the integration of the product of $(1-x)^{-d}$ and $I_{1,2,3,4}(x)$. We get
\begin{align*}
\int_0^1(1-x)^{-d}I_1(x)dx
&=A_1\int_0^1(1-x)^{-d}x^{3-3d}\hyp21{1,d}{4-3d}{x}dx+ \\
&+\sum_{k=0}^\infty B_1(k)
\int_0^1(1-x)^{-d}\hyp21{3d-3-k,d}{3d-2-k}{x}dx \qquad\qquad
\end{align*}
\begin{align*}
\int_0^1(1-x)^{-d}I_2(x)dx
&=\dfrac{1}{(1-d)}x^{1-d}\int_0^1(1-x)^{-d}\hyp32{2d-2-m,d,1}{2d-1-m,2-d}{x}dx + \\
&+\sum_{k=0}^\infty\ B_2(k)\int_0^1(1-x)^{-d}\hyp21{3d-3-k,d}{3d-2-k}{x}dx
\end{align*}
\begin{align*}
\int_0^1(1-x)^{-d}I_3(x)dx
&=A_2\int_0^1(1-x)^{-d}x^{2-2d}\hyp21{1,2d-1}{3-2d}{x}dx+ \\
&+\sum_{k=0}^\infty B_3(k)\int_0^1(1-x)^{-d}\hyp21{2d-2-k,d}{2d-2-k}{x}dx \qquad\qquad
\end{align*}
\begin{align*}
\int_0^1(1-x)^{-d}I_4(x)dx
&=\dfrac{\Gamma(1-d)\Gamma(3-3d)}{\Gamma(4-4d)}\int_0^1(1-x)^{3-5d}dx+ \\
&+\dfrac{1}{(1-d)}\int_0^1(1-x)^{-d}x^{1-d}\hyp21{3d-2,1}{2-d}{x}dx \qquad\qquad
\end{align*}
For $c_5$ we had to add all the integrals. These integrals are all standard. Most of the terms are simple. However there remains four terms with summation of a hypergeometric function. These are very complicated. It is hard to show that this gives the same result as in Section 5. This will therefore not be elaborated further here.
\vspace{0.1 cm}

\section{Discussion}
In this article, two methods are discussed for the calculation of the cumulants of the Rosenblatt distribution. Both methods require integrals to be calculated. The result is a formula with hypergeometric functions. To compare the methods, we look at the number of integrals used to determine $c_k$. The calculation of $c_5$ is discussed as an example.
In the method of Veillette and Taqqu all previous $c_k$ must be known as well. It is assumed that the formula for the third cumulant is known. Furthermore, the Lemmas are used. Then for $c_4$ 4 integrals have to be calculated. For $c_5$ that are 8 integrals. Total 12 integrals. These are all standard integrals.

The direct method does not require all previous $c_k$ to be known. In principle, 60 integrals must be calculated for $c_5$. These are all standard and many integrals are equal. In practice, that will be much less.

In conclusion, it appears that all integrals to be determined in both methods are relatively simple. That leaves the number of summations with hypergeometric functions. These are much more difficult with the method of Veillette and Taqqu than with the direct method. Note that for $c_6$ there are 720 possible regions of which 60 regions remain. This gives 360 integrals!
We are working to have the calculation done automatically by a computer program.
\vspace{0.1 cm}

\section{Appendix: Some Thomae transformations}

For the $_3F_2$ hypergeometric functions with unit argument there are a lot of Thomae transformations \cite{1}. Here are some samples used in this paper. In all formulas we can interchange $a$,$b$ and $c$, as well $e$ and $f$.
\begin{equation}
\hyp32{a,b,c}{e,f}{1}=\dfrac{\Gamma(e+f-a-b-c)\Gamma(f)}{\Gamma(f-a)]\Gamma(e+f-b-c)}
\hyp32{a,e-c,e-b}{e+f-b-c,e}{1}
\label{55.1}
\end{equation}

\begin{multline}
\qquad\qquad\quad\hyp32{a,b,c}{e,f}{1}=\dfrac{\Gamma(e+f-a-b-c)\Gamma(e)\Gamma(f)}
{\Gamma(a)\Gamma(e-f+a-c)\Gamma(e+f-a-b)} \\
\hyp32{e+f-a-b-c,f-a,e-a}{e+f-a-c,e+f-a-b}{1}
\label{55.5}
\end{multline}
\begin{multline}
\qquad\qquad\quad\hyp32{a,b,c}{e,f}{1}=\dfrac{\Gamma(1-c)\Gamma(f)\Gamma(a+1)\Gamma(b-a)}
{\Gamma(b)\Gamma(f-a)\Gamma(a-c+1)}- \\
-\dfrac{a\, \Gamma(1-c)\Gamma(f)}{\Gamma(b-c+1)\Gamma(f-b)(b-a)}
\hyp32{b,b-f+1,b-a}{b-a+1,b-c+1}{1}
\label{55.3}
\end{multline}


\begin{thebibliography}{1}
%
\bibitem{6}
Diekema, E. {\em An exact formula for the fourth cumulant of the Rosenblatt distribution}. arXiv:2305.03600, 2023.	
%
\bibitem{2}
Prudnikov, A.P., Brychkov, Yu.A., Marichev, O.I. {\em Integrals and seried, Volume 3: More special functions}. Gordon and Breach, 1990.
%
\bibitem{3}
Tudor, C.A. {\em Analysis of the Rosenblatt process}. (2--8) ESAIM Probab. Stat. 12 230--257. MR2374640.
%
\bibitem{4}
Veilette, M.S., Taqqu, M.S. {\em Properties and numerical evaluation of the Rosenblatt distribution}. Bernoulli 19(3) (2013), 982--1005.
%
\bibitem{5}
Veilette, M.S., Taqqu, M.S. {\em Supplement to "Properties and numerical evaluation of the Rosenblatt distribution"}. (2012), DOI: 10.3150/12-BEJ4218SUPP.
%
\bibitem{1}
Slater, L.J. {\em Generalized hypergeometric functions}. Cambridge university press, 1966.
\end{thebibliography}
\end{document}